\theoremstyle{definition}
\newtheorem{defi}{Definition}[section]
\newtheorem{rmk}[defi]{Remark}
\theoremstyle{plain}
\newtheorem{lm}[defi]{Lemma}
\newtheorem{thm}[defi]{Theorem}
\newcommand{\R}{\mathbb R}
\newcommand{\N}{\mathbb N}
\newcommand{\Z}{\mathbb Z}
\newcommand{\C}{\mathbb C}
\renewcommand{\P}{\mathbb P}
\newcommand{\bx}{\mathbf{x}}
\newcommand{\by}{\mathbf{y}}
\renewcommand{\c}{\mathcal{C}}
\renewcommand{\r}{\mathcal{R}}
\renewcommand{\a}{\mathcal{A}}
\renewcommand{\o}{\mathcal{O}}
\newcommand{\p}{\mathcal{P}}
\newcommand{\f}{\mathcal{F}}
\newcommand{\z}{\mathcal{Z}}
\newcommand{\Or}{\mathcal{O}(\mathcal{R})}
\DeclareMathOperator{\id}{Id}
\DeclareMathOperator{\im}{im}
\DeclareMathOperator{\pr}{pr}
\DeclareMathOperator{\Int}{Int}
\begin{document}
	
	\title{Approximation of holomorphic Legendrian curves with jet-interpolation}
	\author{Andrej Svetina}

	\address[Andrej Svetina]{Faculty of Mathematics and Physics, University of Ljubljana, Jadranska 21, 1000 Ljubljana, Slovenia}
	\email{andrej.svetina@fmf.uni-lj.si}
	
	\date{\today}
	
	\subjclass[2020]{53D10, 32E30}
	
	\keywords{Holomorphic Legendrian curve, Carleman approximation, Mergelyan approximation}

	\begin{abstract}
		We prove several interpolation results for holomorphic Legendrian curves lying in an odd dimensional complex Euclidean space with the standard contact structure. In particular, we show that an arbitrary countable set of points in $\C^{2n+1}$ lies on an injectively immersed isotropic surface with a prescribed complex structure. If the set has no accumulation points, the surface may be taken properly embedded. We also prove a Carleman-type theorem for holomorphic Legendrian curves with interpolation. Namely, a Legendrian curve, defined on a certain type of unbounded closed set in a given open Riemann surface $\r$, may be approximated in the $\c^0$-topology by an entire Legendrian curve with prescribed finite-order Taylor polynomials at a closed discrete set of points in $\r$. Under suitable conditions, the approximating map may be made into a proper embedding.

	\end{abstract}

	\maketitle

\section{Introduction}

	The \emph{standard holomorphic contact structure on $\C^{2n+1}$} is the holomorphic hyperplane subbundle $\xi_{std}$ in the holomorphic tangent bundle $T\C^{2n+1}$ given by $\xi_{std} = \ker \alpha_{std}$, where $\alpha_{std}$ is the holomorphic differential 1-form, called \emph{the standard contact form on $\C^{2n+1}$}, given in the holomorphic coordinates $(x_1,y_1,\ldots,x_n,y_n,z)$ on $\C^{2n+1}$ by
\begin{equation}
	\label{stdform}
\alpha = \dd z + x_1 \dd y_1 + \cdots + x_n \dd y_n.
\end{equation}
From now on we will omit the subscript $std$ and write $\alpha$ for the above 1-form and $\xi = \ker \alpha$. It follows by an easy computation that the top form $\alpha \wedge (\dd \alpha)^n$ is everywhere nonvanishing, i.\ e.\ it is a holomorphic volume form on $\C^{2n+1}$. Hence, the 2-form $\dd \alpha$ is nondegenerate on $\xi$, giving $\xi$ the structure of a holomorphic symplectic bundle, thus the maximal dimension of any complex submanifold in $\C^{2n+1}$, tangent to $\xi$, is $n$. Such subbundles are also called \emph{completely nonintegrable}. In the following we are mainly concerned with tangent complex curves, called \emph{holomorphic Legendrian curves}, that is, holomorphic maps $f\colon \r \to \C^{2n+1}$, defined on open Riemann surfaces, such that $\im \dd f_p \subset \xi_{f(p)}$ holds for every $p \in \r$, or equivalently, $f^*\alpha = 0$ holds along $\r$.

It was proven by Alarcón, Forstnerič and López in \cite{Alarcon2017} that such curves admit Runge- and Mergelyan-type approximation properties in the sense that a holomorphic Legendrian curve defined on a neighbourhood of a holomorphically convex compact set $K$ in an open Riemann surface $M$ may be approximated uniformly on $K$ by proper holomorphic Legendrian embeddings $M \to \C^{2n+1}$. Furthermore, they showed that approximation with complete embeddings is possible on \emph{compact bordered Riemann surfaces} (see Section \ref{cptbrd} for the definition), in the sense that any Legendrian curve $f\colon M \to \C^{2n+1}$, defined on a compact bordered Riemann surface $M$ with nonempty boundary $bM$ and of class $\a^1(M)$, may be approximated uniformly on $M$ by continuous injective maps $F\colon M \to \C^{2n+1}$ such that $F$ restricted to the interior $\Int M = M \backslash bM$ is a complete holomorphic Legendrian embedding, whereby an embedding $F\colon \Int M \to \C^{2n+1}$ is complete if the standard Euclidean metric on $\C^{2n+1}$ induces a complete metric on the image $F(\Int M)$.

It follows from the former result that any open Riemann surface properly embeds into $\C^{2n+1}$ as a holomorphic Legendrian curve. We improve this result to show that the proper embedding in question hits a prescribed closed discrete set in the codomain.

\begin{thm}
	\label{weierstrass}
	Let $\{a_n\}_{n\in \N}$ be a closed discrete set in an open Riemann surface $\r$. For an arbitrary closed discrete set $\{b_n\}_{n\in \N}$ in $\C^{2n+1}$ there exists a proper holomorphic Legendrian embedding $f\colon \r \to \C^{2n+1}$ such that $f(a_n) = b_n$ holds for every $n \in \N$.
	
%	 sequence $(b_n)_{n\in \N}$ in $\C^{2n+1}$ there exists a holomorphic Legendrian immersion \linebreak $f\colon  M \to \C^{2n+1}$ such that $f(a_n) = b_n$ for all $n \in \N$. If the points $b_n \in \C^{2n+1}$ are pairwise distinct, $f$ may be made injective, and if, in addition, the sequence $(b_n)_{n\in \N}$ has no accumulation points in $\C^{2n+1}$, then $f$ may be made properly embedded.
\end{thm}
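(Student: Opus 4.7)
The plan is an exhaustion-and-approximation scheme built on a Mergelyan-type approximation theorem for holomorphic Legendrian curves with jet-interpolation, which is one of the paper's main technical contributions; I shall assume this ingredient is available. Fix a normal exhaustion $K_1 \Subset K_2 \Subset \cdots$ of $\r$ by smoothly bounded, $\Or$-convex compact subsets with $\bigcup_j K_j = \r$, and let $B_j \subset \C^{2n+1}$ denote the open ball of radius $j$ centred at the origin. After passing to a subsequence of the exhaustion and simultaneously reindexing $\{a_i\}$ and $\{b_i\}$, arrange that for some sequence $k_j \nearrow \infty$ the set $\{a_1,\ldots,a_{k_j}\}$ equals $\{a_i\} \cap K_j$ and $\{b_1,\ldots,b_{k_j}\}$ equals $\{b_i\} \cap B_j$.

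The main step is to construct inductively a sequence of holomorphic Legendrian embeddings $f_j\colon U_j \to \C^{2n+1}$, defined on open neighbourhoods $U_j \supset K_j$, such that: (a) $f_j(a_i) = b_i$ for every $i \le k_j$; (b) $\|f_j - f_{j-1}\|_{K_{j-1}} < \varepsilon_j$ for a summable sequence $\{\varepsilon_j\}$ chosen small enough to preserve injectivity in the limit; (c) $f_j$ maps $K_{j+1} \setminus \mathring{K_j}$ outside $B_j$, which forces properness of the limit. The base map $f_1$ comes from the Mergelyan-with-interpolation theorem applied to a tubular neighbourhood of an arc through $a_1,\ldots,a_{k_1}$. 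In the inductive step, first extend $f_j$ across $K_{j+1}\setminus K_j$ to a smooth Legendrian map passing through $b_i$ at $a_i$ for $k_j < i \le k_{j+1}$ and mapping $bK_{j+1}$ outside $B_{j+1}$ (achievable by composing on the Runge annular piece $K_{j+1}\setminus K_j$ with a sufficiently large contact-preserving translation of $\C^{2n+1}$); then apply the Mergelyan-with-jet-interpolation theorem on $K_{j+1}$ to approximate this extension by a holomorphic Legendrian embedding, keeping all prescribed interpolation values fixed.

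The principal obstacle is the integral constraint $f^*\alpha = 0$: in the coordinates $(x_1,y_1,\ldots,x_n,y_n,z)$ the $z$-component is determined by the path integral of $-\sum_i x_i \dd y_i$, so prescribing all $2n+1$ coordinates of a Legendrian map at a finite set of points is a period problem rather than a free interpolation problem. The Mergelyan-with-interpolation theorem resolves this by a period-killing implicit-function argument, using finitely many Legendrian deformations supported away from the interpolation nodes to absorb simultaneously the interpolation constraints and the resulting period discrepancies. Granted this ingredient, the limit $f = \lim_j f_j$ is well defined by (b), is a proper holomorphic Legendrian embedding by (b) and (c) together with a general-position perturbation ensuring injectivity, and satisfies $f(a_n) = b_n$ for every $n$ by (a).
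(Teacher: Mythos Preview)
Your overall architecture is the right one and in fact mirrors the induction the paper runs in the proof of Theorem~\ref{mergelyan}; the paper then obtains Theorem~\ref{weierstrass} simply by invoking Theorem~\ref{mergelyan} once (take $S$ a small disk, $\Lambda''=\{a_n\}$, $\phi$ locally constant with $\phi\equiv b_n$ near $a_n$, $m\equiv 0$; properness, injectivity, and the embedding property are part of the conclusion). So if by ``Mergelyan-with-jet-interpolation'' you mean the full Theorem~\ref{mergelyan}, your inductive scheme is unnecessary; if you mean only the compact-set version (Lemmas~\ref{finiteInterpolation}, \ref{outsideInterpolation}, Theorem~\ref{borderedInterpolation}), then your scheme is essentially re-proving Theorem~\ref{mergelyan}.

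There is, however, a genuine gap in your properness step. You propose to achieve condition~(c) by extending $f_j$ across the annulus $K_{j+1}\setminus K_j$ to a \emph{smooth} Legendrian map, ``achievable by composing on the Runge annular piece with a sufficiently large contact-preserving translation''. This does not work: composing with a global contactomorphism destroys the already-fixed interpolation values on $K_j$, while composing only on the annulus produces a discontinuity along $bK_j$; there is no way to glue a translated copy to $f_j|_{K_j}$ across a two-dimensional collar while preserving $f^*\alpha=0$. Moreover, the Mergelyan lemma you want to invoke afterwards applies to generalised Legendrian curves of class $\a^r$ on \emph{admissible} sets (holomorphic on the interior, smooth only along one-dimensional arcs), not to maps that are merely smooth on a two-dimensional annulus. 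The paper handles exactly this point with Lemma~\ref{bdenlarge}: one attaches finitely many arcs from $bK_j$ to the new interpolation nodes and to $bK_{j+1}$, extends $f_j$ as a smooth Legendrian curve \emph{along those arcs} (where the extension problem is one-dimensional and hence elementary), and then runs a careful coordinate-by-coordinate deformation to push $\|F\|_\infty$ above the required threshold on $bK_{j+1}$. That boundary-enlargement lemma is the missing idea in your outline; once it (or an equivalent) is supplied, your induction closes and agrees with the paper's proof.

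A minor further issue: your simultaneous reindexing so that $\{a_1,\dots,a_{k_j}\}=\{a_i\}\cap K_j$ \emph{and} $\{b_1,\dots,b_{k_j}\}=\{b_i\}\cap B_j$ is not generally possible with a single permutation and a fixed exhaustion, since the correspondence $a_i\leftrightarrow b_i$ is prescribed. The paper avoids this by indexing the exhaustion via the condition $\Lambda\cap\{\|\phi\|_\infty\le j\}\subset\Int K_j$ rather than trying to synchronise two independent exhaustions.
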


Theorem \ref{weierstrass} follows from the more precise statement given in Theorem \ref{mergelyan} below. Namely, in addition to prescribing the values to the Legendrian curve one may also prescribe finite order Taylor polynomials, i.\ e.\ \emph{jets}, provided these are obtained as jets of holomorphic Legendrian curves.

Note that Legendrian \emph{immersions} are a special case of \emph{directed immersions} $f\colon M \to X$. Let $G_k(X)$ denote the Grassmann bundle of $k$-planes in the complex manifold $X$. For a holomorphic immersion $f\colon M \to X$, where $M$ is a complex manifold of dimension $\dim M =k$, define its \emph{Gauss map} $Gf\colon M \to G_k(X)$ by
\[
Gf\colon p \mapsto \left(f(p), \dd f_p(T_pM)\right) \quad \text{for} \; p\in M.
\]
Let $A \subset G_k(X)$ be an arbitrary subset and denote by $A_q = A \cap \pi^{-1}(q)$ its fiber over $q \in X$, where $\pi \colon G_k(X) \to X$ is the canonical projection. An immersion $f\colon M \to X$ is said to be \emph{$A$-directed}, if its image lies in $A$, i.\ e.\ $\dd f_p(T_pM) \in A_{f(p)}$ for all $p \in M$. In the special case where $X = \C^n$ and $A$ takes the form $A=\C^n\times \mathcal{A}$ where $\mathcal{A}\subset \C^n$ is a closed conical subvariety such that $\mathcal{A}\backslash \{0\}$ is an Oka manifold (see \cite{Forstneric2017} for a comprehensive account on Oka manifolds), the Mergelyan approximation property for immersions of open Riemann surfaces into $\C^m$ was proven by Alarcón and Forstnerič in \cite{Alarcon2014}. Furthermore, the Carleman approximation property of directed immersions in the same special case has been considered by Castro-Infantes and Chenoweth in \cite{CastroInfantes2020}. There, the authors have shown that an $A$-directed immersion $f\colon S \to \C^n$, where $A$ is as before and $S$ is a \emph{Carleman admissible subset} (see Definition \ref{carlad}) in an open Riemann surface $\r$, may be approximated by $A$-directed immersions $F\colon\r \to \C^n$, such that $|F(q)-f(q)|< \varepsilon(q)$ holds for $q \in S$, where $\varepsilon\colon S \to (0,\infty)$ is a given positive continuous function.

If one wishes to establish a similiar approximation property in the case of Legendrian curves, the situation is markedly different, since the fibers $A_p$ are allowed to vary from point to point. However, once one establishes the Runge (or Mergelyan) approximation property, Carleman property follows by applying a simple induction procedure similar to that in \cite{CastroInfantes2020}. Interpolation on a closed subset $\Lambda$ is obtained by interpolating on successively bigger finite subsets, namely the intersection of $\Lambda$ with holomorphically convex sets in a chosen normal exhaustion of $\r$. This presents a rough outline of the proof of Theorem \ref{carleman} whose special consequence is the following.

\begin{thm}
	Suppose $f\colon \R \to \C^{2n+1}$ is a smooth proper arc in $\C^{2n+1}$ satisfying $f^*\alpha = 0$ and $\eta\colon \R \to \r$ is a smooth proper arc in an open Riemann surface $\r$ with no self-intersections. For an arbitrary continuous function $\varepsilon\colon \R \to (0,\infty)$ there exists a proper holomorphic Legendrian embedding  $F\colon \r \to \C^{2n+1}$ such that $\|F(\eta(t)) - f(t)\| < \varepsilon(t)$ for every $t \in \R$.
\end{thm}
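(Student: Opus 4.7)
The plan is to deduce this statement directly from Theorem \ref{carleman}, which is the main technical result advertised in the introduction, by exhibiting $\eta(\R)$ as a Carleman admissible set and $f$ as a Legendrian map defined along it. Set $S := \eta(\R) \subset \r$. Since $\eta$ is a smooth proper arc without self-intersections, $S$ is a closed, properly embedded $1$-submanifold of $\r$ diffeomorphic to $\R$, and as such is a Carleman admissible subset of $\r$ in the sense of Definition \ref{carlad} (with empty compact part and a single properly embedded unbounded smooth arc). The map $g := f\circ \eta^{-1}\colon S \to \C^{2n+1}$ is a well-defined smooth map because $\eta$ is a smooth injective proper immersion; moreover $g^*\alpha = (\eta^{-1})^*(f^*\alpha) = 0$ on $S$, so $g$ is a Legendrian curve defined on the Carleman admissible set $S$ in the sense required by Theorem \ref{carleman}.

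Next, transfer the tolerance function from the parameter line to the arc: let $\tilde\varepsilon\colon S \to (0,\infty)$ be the continuous positive function $\tilde\varepsilon(\eta(t)) := \varepsilon(t)$, which is well-defined and continuous because $\eta$ is a homeomorphism onto $S$. Applying Theorem \ref{carleman} to $(S, g, \tilde\varepsilon)$, in its strongest form that produces a proper holomorphic Legendrian embedding whenever approximation with such embeddings is admissible (as promised in the abstract), yields a proper holomorphic Legendrian embedding $F\colon \r \to \C^{2n+1}$ with $\|F(p) - g(p)\| < \tilde\varepsilon(p)$ for every $p \in S$. Setting $p = \eta(t)$ gives
\[
\|F(\eta(t)) - f(t)\| = \|F(\eta(t)) - g(\eta(t))\| < \tilde\varepsilon(\eta(t)) = \varepsilon(t)
\]
for every $t \in \R$, which is the asserted estimate.

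The main obstacle is of course Theorem \ref{carleman} itself; once it is in place, the present statement is essentially a matter of packaging its hypotheses. The only bookkeeping that genuinely needs attention is verifying that a single properly embedded smooth arc in $\r$ fits the definition of a Carleman admissible set, in particular that one may build a normal exhaustion of $\r$ by smoothly bounded, Runge compacts meeting $S$ in finitely many closed subarcs with the appropriate convexity near their endpoints; this is standard and relies only on the properness of $\eta$, not on the contact structure. No extension of $g$ to a neighborhood is required beyond what Theorem \ref{carleman} already builds into the Mergelyan step on the relevant arc, because the Legendrian condition $g^*\alpha = 0$ already holds on $S$.
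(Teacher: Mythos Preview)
Your proposal is correct and matches the paper's approach: the paper presents this theorem in the introduction explicitly as a ``special consequence'' of Theorem \ref{carleman}, and your reduction via $S=\eta(\R)$, $g=f\circ\eta^{-1}$, $\tilde\varepsilon=\varepsilon\circ\eta^{-1}$, with $\Lambda=\emptyset$, is exactly the intended one. The properness hypothesis of Theorem \ref{carleman} is met because $f$ is proper and $\eta$ is a homeomorphism onto $S$, while the immersivity and injectivity conditions on $\tilde f|_\Lambda$ are vacuous, so the conclusion yields a proper holomorphic Legendrian embedding as required.
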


\section{Preliminaries and main results.}

A smooth Jordan arc in an open Riemann surface $\r$ is the image of an injective proper smooth map $\gamma\colon  I \to \r$, defined on an interval $I \subset \R$ (open, closed or half-open), with no self intersections. A smooth Jordan closed curve in $\r$ is the image of a smooth embedding $S^1 \to \r$ of the circle. Note that $I$ does not contain an endpoint if and only if the corresponding curve $\gamma$ is divergent near that endpoint.

A compact subset $S$ of an open Riemann surface $\mathcal{R}$ is called \emph{admissible} if $S = K \cup \Gamma$, where $K=\bigcup \overline{D}_j$ is a union of finitely many pairwise disjoint compact domains $\overline{D}_j$ in $\mathcal{R}$ with piecewise $\c^r$-smooth boundary for some $r \in \N$ and $\Gamma = \bigcup \Gamma_i$ is a union of finitely many pairwise disjoint smooth Jordan arcs or closed curves that intersect $bK$ only at their endpoints (if at all) and  their intersections with the boundary of $K$ are transverse. Note that by the above definition every arc in $\Gamma$ is defined on a \emph{compact} interval in $\R$.

If $S$ is an admissible set in an open Riemann surface $\r$, we use the following notation for the set of $r$-times continously differentiable functions on $S$ that are holomorphic in the interior $\Int S$ of $S$:
$$
\a^r(S) := \{f \in \c^r(S,\C)\,|\, f\colon  \Int S \to \C \text{ is holomorphic}\}.
$$
A map $f\colon S \to \C^{2n+1}$ defined on some admissible set $S$ in an open Riemann surface $\r$ is called a \emph{generalised Legendrian curve} of class $\a^r(S)$ if its component functions are of class $\a^r(S)$ and $f^*\alpha=0$ holds at every point of $S$, where $\alpha$ is as in \ref{stdform}. Our first main result is the following:

\begin{thm}
	\label{mergelyan}
	Let $S$ be a Runge admissible subset in an open Riemann surface $\r$ and let $f\colon S \to \C^{2n+1}$ be a generalised Legendrian curve of class $\a^r(S)$ for some $r \geq 1$. Suppose we are given
	\begin{enumerate}[label=\roman*)]
		\item a closed discrete subset $\Lambda = \Lambda' \cup \Lambda'' \subset \mathcal{R}$, where $\Lambda' \subset \Int S$ and $\Lambda'' \subset \mathcal{R}\backslash S$, 
		
		\item a holomorphic Legendrian curve $\phi\colon O \to \C^{2n+1}$, defined on a neighbourhood $O \subset \r \backslash S$ of the set $\Lambda''$, and
		
		\item a function $m\colon  \Lambda \to \N \cup \{0\}$.
	\end{enumerate}
	Then $f$ may be approximated uniformly on $S$ in the $\c^r(S)$-topology by holomorphic Legendrian curves $F \colon \mathcal{R} \to \C^{2n+1}$ such that the following hold:
	\begin{enumerate}[label=(\Roman*)]
		\item the map $F-f$ has a zero of order at least $m(p)$ at $p$ for all $p \in \Lambda'$, and
		\item the map $F-\phi$ has a zero of order at least $m(p)$ at every point $p \in \Lambda''$.
	\end{enumerate}

	Let $\tilde{f} \colon  S \cup O \to \C^{2n+1}$ be given by $\tilde{f}(q) = f(q)$ for $q \in S$ and $\tilde{f}(q) = \phi(q)$ for $q \in O$. If $\dd \tilde{f}(p) \neq 0$ for all $p \in \Lambda \cap \{m\geq 1\}$, then $F$ may be chosen an immersion. If, in addition, the map $\tilde{f}|_\Lambda$, is injective, then $F$ may be chosen injective. If $\tilde{f}|_\Lambda$ is proper, then $F$ may be made proper.
\end{thm}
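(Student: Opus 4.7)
The strategy is to reduce the theorem to a single Mergelyan-with-jet-interpolation step on Runge admissible compacta and iterate it along a normal exhaustion of $\r$. First I would enlarge $S$ to a Runge admissible compact $S^*$ that contains a small closed disk (inside $O$) around every point of $\Lambda''$ in its interior, joining the new disks to the components of $S$ by smooth arcs inside $O$ whenever needed to preserve the Runge property. Extending $f$ by $\phi$ over the added disks produces a single generalised Legendrian curve $\tilde f \in \a^r(S^*)$ carrying \emph{all} the interpolation data, now supported on $\Lambda \cap \Int S^*$. Fix a normal exhaustion $S^* \subset K_1 \subset K_2 \subset \cdots$ by Runge admissible compacta and a summable error budget $(\varepsilon_N)$. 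A standard diagonal argument then reduces the theorem to the following inductive step: given $F_{N-1} \in \a^r(K_{N-1})$ generalised Legendrian and satisfying the jet conditions on $\Lambda \cap K_{N-1}$, produce $F_N \in \a^r(K_N)$ of the same kind, $\varepsilon_N$-close to $F_{N-1}$ in $\c^r(K_{N-1})$ and satisfying all jet conditions on $\Lambda \cap K_N$. The limit $F = \lim_N F_N$ is the entire Legendrian curve we seek.

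To carry out the inductive step I would follow the horizontal-vertical decomposition of \cite{Alarcon2017}. Write $F_{N-1} = (G_{N-1}, z_{N-1})$ with $G_{N-1} = (x_1, y_1, \ldots, x_n, y_n)$ and $\dd z_{N-1} = -\sum_j x_j\, \dd y_j$. By classical Mergelyan approximation with Weierstrass jet interpolation on $\r$, construct a holomorphic $\tilde G\colon \r \to \C^{2n}$ close to $G_{N-1}$ on $K_{N-1}$ in the $\c^r$-norm and matching the prescribed $m(p)$-jet at every $p \in \Lambda \cap K_N$. The obstruction to recovering a full Legendrian curve is a finite-dimensional linear map: the periods of $-\sum_j \tilde X_j\, \dd \tilde Y_j$ over a homology basis of $K_N$ must vanish, and the path integrals from a chosen basepoint $p_0 \in \Lambda \cap K_N$ to every other point of $\Lambda \cap K_N$ must equal the prescribed $z$-value differences. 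To kill this obstruction, insert $\tilde G$ into a holomorphic spray $\tilde G_t = \tilde G + \sum_k t_k h_k$ whose generators $h_k$ all vanish to order $> \max_{p \in \Lambda \cap K_N} m(p)$ at every point of $\Lambda \cap K_N$. Since the constraint set is finite, enough free directions remain to make the spray period-dominating in the sense of \cite{Alarcon2017}, and the implicit function theorem yields a small $t$ for which all constraints hold while the horizontal jets at $\Lambda \cap K_N$ are preserved. Integrating $-\sum_j \tilde X_j(t)\, \dd \tilde Y_j(t)$ from $p_0$ and adding the constant $z_{N-1}(p_0)$ recovers the $z$-component and completes the step.

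Immersivity, injectivity, and properness are layered on top of this scheme. At $p \in \Lambda$ with $\dd \tilde f(p) \neq 0$, immersivity is automatic from $m(p) \geq 1$-jet matching; away from $\Lambda$ I would enlarge the spray by further parameters, again vanishing to the required order along $\Lambda$, so that the first-jet evaluation becomes submersive and a generic $t$ gives an immersion. Injectivity then follows from the standard transversality-to-the-diagonal argument of \cite{Alarcon2017}, applied to the evaluation-difference map of a further enlarged spray. For properness, at stage $N$ I would additionally enforce $\|F_N(q)\| > N$ for $q \in K_N \setminus \Int K_{N-1}$, exploiting the freedom in the $z$-component away from $\Lambda$. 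The main technical obstacle is exactly this balancing act in the inductive step: one must ensure that the period-dominating spray, enlarged to carry the immersivity, injectivity and properness perturbations, can simultaneously be chosen to vanish to the prescribed finite order along $\Lambda \cap K_N$ while still yielding surjectivity of the finite collection of linear functionals encoding all constraints. Because $\Lambda \cap K_N$ is finite and the space of Legendrian perturbations vanishing on it to a given order remains infinite-dimensional, the flexibility is there; converting it into explicit estimates and verifying uniform convergence of the inductive scheme in the $\c^r$-topology is where most of the work lies.
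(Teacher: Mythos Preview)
Your inductive step---Mergelyan on the horizontal components with jet interpolation, followed by a period-dominating spray whose generators vanish to the prescribed order on the finite set $\Lambda \cap K_N$---is exactly the paper's engine (Lemma~\ref{finiteInterpolation} and Theorem~\ref{borderedInterpolation}), and once that is in place the limit argument is standard. The genuine gap is your preliminary reduction: $\Lambda''$ is only closed and discrete in $\r$, hence may well be infinite, so no \emph{compact} Runge admissible $S^*$ can contain a disk around every one of its points. The paper handles $\Lambda''$ incrementally instead: at stage $j$ it attaches to $K_{j-1}$ small disks around only the finitely many points of $\Lambda'' \cap (K_j \setminus K_{j-1})$, extends the curve Legendrian-ly over the connecting arcs by hand (choose the $(x_i,y_i)$ along each arc subject to one integral condition per point, then integrate for $z$; this is Lemma~\ref{outsideInterpolation}), and only then runs the spray. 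Since your step already refers to $\Lambda \cap K_N$, the fix is simply to drop the global $S^*$ and fold this extension into each step; note also that the connecting arcs need not lie inside $O$, so ``extend $f$ by $\phi$'' does not cover them.

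A second gap is the properness clause. You cannot ``exploit the freedom in the $z$-component away from $\Lambda$'': once the horizontal part is fixed, $z$ is determined up to an additive constant by $-\int \sum_j x_j\,\dd y_j$, so there is no independent freedom there. The paper secures $\|F_j\|_\infty > j$ on $bK_j$ by a separate and rather delicate construction (Lemma~\ref{bdenlarge}): one partitions the new collar $K_j \setminus \Int K_{j-1}$ into sectors on each of which some specific coordinate of the curve is already large near $bK_{j-1}$, and then deforms the remaining coordinates sector by sector---keeping the large one fixed so the lower bound persists---to push the boundary values past the target while preserving both the Legendrian condition and all jets on $\Lambda$. This is not a byproduct of the period correction and needs its own spray argument.
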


Our result is similar to the one obtained by Alarcón, Forstnerič and Lárusson in \cite{Alarcon2019b}. There the authors have proven that any holomorphic Legendrian curve $f\colon  V \to \C\P^3$, where $V \subset M$ is a neighbourhood of a compact set $K \subset M$ in an open or compact Riemann surface $M$, may be approximated uniformly on $K$ by entire Legendrian curves $F\colon  M \to \C\P^3$, interpolating $f$ at a finite number of points in $K$ to a given finite order. It follows from their proof that in the case the image $f(V)$ is contained in some $\C^3 \subset \C\P^3$, the image $F(M)$ also lies in $\C^3$, thus giving a special case of Theorem \ref{mergelyan} when $n=1$, $\Lambda'' = \emptyset$ and $f$ is holomorphic on a neighbourhood of $S$. Their method uses a representation formula for Legendrian curves in terms of meromorphic functions which can in turn be approximated by entire meromorphic functions by Royden's theorem (see \cite{Royden1967}) from which one recovers the required entire Legendrian curve. Interpolation on a discrete set can then be obtained by using an inductive procedure.

Note that in our setting, one may first use Mergelyan's theorem with interpolation in order to approximately extend the components of the curve from the given compact set to some neighbourhood of it while fixing its jets on the required finite set and then use the result from \cite{Alarcon2019b} to further approximate the given Legendrian curve with an entire one. However, in the first step, one would still need to construct an appropriate holomorphic spray in order to guarantee the curve on the neighbourhood remains Legendrian. Since this construction constitutes the bulk of our proof, we present it in its entirety.

Note also that Theorem \ref{mergelyan} implies that under necessary conditions on the jets, prescribed at the set $\Lambda$, one may approximate the given Legendrian curve with a \emph{proper} Legendrian embedding, where the necessary conditions are the obvious ones, namely that the prescribed values are pairwise distinct and the prescribed first order derivatives are nonvanishing. We also do not impose any restrictions on the set at which we interpolate, apart from it being closed and discrete as dictated by the identity priciple.

Our second result concerns approximation on a certain kind of unbounded sets which are obtained as a generalisation of the following well known definition: a compact set $K$ in an open Riemann surface $\mathcal{R}$ is $\mathcal{O}(\mathcal{R})$-convex (or \emph{Runge in $\mathcal{R}$}), if its \emph{holomorphically convex hull}
$$
\widehat{K} := \left\{z \in \mathcal{R} \, : \, |f(z)| \leq \sup_{u \in K} |f(u)| \quad \text{for all} \;\, f \in \mathcal{O}(\mathcal{R})\right\}
$$
is compact. By the classical result of Behnke and Stein, see \cite{Behnke1947}, this holds if and only if $K$ has no \emph{holes}, that is, relatively compact connected components in the complement $\mathcal{R}\backslash K$. Following \cite{Manne2011} we extend the definition to certain unbounded sets. Let $E$ be a closed subset of $\mathcal{R}$ and define its \emph{hull} to be the set 
$$
\widehat{E} = \bigcup_{i=1}^{\infty} \widehat{E}_j,
$$
where $(E_j)_{j \in \N}$ is some normal exhaustion, meaning $E_j \subset \Int E_{j+1}$ for all $j$ of $E$ by compact sets $E_j$. Note that for a given closed set $E$ its hull $\widehat{E}$ is independent of the choice of the normal exhaustion: if $\{E_j'\}_{j \in \N}$ is another such exhaustion, then $E_j' \subset E_{k_j}$ for some $k_j \in \N$ and thus
\[
\bigcup_{j=1}^\infty \hat{E}_j' \subset \bigcup_{j=1}^\infty \hat{E}_{k_j} \subset
\bigcup_{k=1}^\infty \hat{E}_k
\]
and the reverse inclusion holds by symmetry. If $\widehat{E} = E$, the set $E$ is called $\o(\r)$-convex. For a closed set $E$ define the holes $h(E)$ of $E$ by
$$
h(E) = \overline{\widehat{E}\backslash E}.
$$
A closed set $E$ in an open Riemann surface $\mathcal{R}$ has \emph{bounded exhaustion hulls} if for every compact set $K \subset \mathcal{R}$ the set $h(K\cup E)$ is compact. Since open Riemann surfaces are Stein manifolds, every compact $\o(\r)$-convex subset has bounded exhaustion hulls. We will use the following lemma \cite[Lemma 3]{Chenoweth2019}.

\begin{lm}
	\label{bddexhaustion}
	Let $X$ be a Stein manifold and let $E \subset X$ be a closed $\o(X)$-convex set with bounded exhaustion hulls. Then there exists a normal exhaustion $\{K_j\}_{j=1}^\infty$ of $X$ by compact $\o(X)$-convex sets $K_j$ such that $K_j \cup E$ is $\o(X)$-convex for every $j \in \N$.
\end{lm}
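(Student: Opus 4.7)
The plan is to enlarge a standard $\o(X)$-convex exhaustion of $X$ to absorb the (compact) holes created by adjoining $E$. First, I would pick a normal exhaustion $\{L_j\}_{j=1}^\infty$ of $X$ by compact $\o(X)$-convex sets with $L_j \subset \Int L_{j+1}$; this exists since $X$ is Stein. For each $j$ the bounded exhaustion hulls hypothesis, applied to the compact set $L_j$, gives that
\[
H_j := h(L_j \cup E) = \overline{\widehat{L_j \cup E}\setminus (L_j \cup E)}
\]
is compact. I would then set
\[
K_j := \widehat{L_j \cup H_j},
\]
the $\o(X)$-convex hull of the compact set $L_j \cup H_j$; this $K_j$ is automatically compact and $\o(X)$-convex.

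The substantive step is to check that $K_j \cup E$ is $\o(X)$-convex. Pick a compact exhaustion $\{E_m\}_{m \in \N}$ of $E$. By definition of the unbounded hull,
\[
\widehat{L_j \cup E} \;=\; \bigcup_m \widehat{L_j \cup E_m} \;\subset\; (L_j \cup E) \cup H_j.
\]
A compactness argument, using that $H_j$ is compact and contained in the increasing family $\{\widehat{L_j \cup E_m}\}_m$, shows that the $\o(X)$-hull of the compact set $L_j \cup H_j$ cannot leave $(L_j \cup E) \cup H_j$, so $K_j \subset L_j \cup H_j \cup E$ and hence $K_j \cup E = L_j \cup H_j \cup E$. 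Convexity of the latter reduces, by the definition of the hull for unbounded closed sets, to verifying that $\widehat{L_j \cup H_j \cup E_m} \subset L_j \cup H_j \cup E$ for every $m$; this is the idempotence of the hull applied to the already-saturated set $L_j \cup E \cup H_j = \widehat{L_j \cup E}$, and boils down to the same compactness-plus-exhaustion argument.

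Finally, to turn $\{K_j\}$ into a genuine normal exhaustion I would pass to a subsequence: since each $K_j$ is compact and $\{L_k\}$ exhausts $X$, inductively choose $j_1 < j_2 < \cdots$ with $K_{j_i} \subset \Int L_{j_{i+1}} \subset \Int K_{j_{i+1}}$ and relabel the resulting subsequence as $\{K_j\}_{j \in \N}$; $\o(X)$-convexity is a per-set property, so each $K_j \cup E$ remains $\o(X)$-convex after relabelling.

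The main obstacle is the saturation argument bounding $K_j$ inside $L_j \cup H_j \cup E$. This is exactly where the bounded exhaustion hulls hypothesis is indispensable: without compactness of $H_j$ there is no way to compare the compact $\o(X)$-hull of $L_j \cup H_j$ with the unbounded hull of $L_j \cup E$, and the whole construction breaks down.
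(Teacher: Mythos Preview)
The paper does not give its own proof of this lemma; it is quoted from \cite[Lemma~3]{Chenoweth2019} and used as a black box. So there is nothing in the paper itself to compare your argument against.

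That said, your outline has a genuine gap at exactly the point you label ``the main obstacle.'' The ``compactness argument'' you invoke to bound $K_j = \widehat{L_j \cup H_j}$ inside $L_j \cup H_j \cup E$ does not go through as stated, for two reasons. First, $H_j = \overline{\widehat{L_j \cup E}\setminus(L_j\cup E)}$ is the \emph{closure} of a subset of $\widehat{L_j\cup E}=\bigcup_m \widehat{L_j\cup E_m}$, and since this increasing union of compacts need not be closed, $H_j$ need not lie inside it. Second, even if one had $H_j \subset \bigcup_m \widehat{L_j\cup E_m}$, compactness of $H_j$ would \emph{not} yield $H_j \subset \widehat{L_j\cup E_{m_0}}$ for a single $m_0$: the sets $\widehat{L_j\cup E_m}$ are compact (hence closed), not open, so no finite-subcover argument is available. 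A model of the obstruction is $[0,1] = \bigcup_m \bigl([0,1-\tfrac1m]\cup\{1\}\bigr)$, an increasing union of compacts none of which contains $[0,1]$. You flag this step as the crux but then assert it rather than prove it; without it, neither the inclusion $K_j \subset L_j \cup H_j \cup E$ nor the claimed $\o(X)$-convexity of $L_j \cup H_j \cup E$ follows. Completing the argument requires an additional ingredient beyond a single hull-taking step---for instance the plurisubharmonic-hull description on Stein manifolds together with the local maximum principle, which is how the matter is handled in \cite{Manne2011} and \cite{Chenoweth2019}.
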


In the case of totally real submanifolds in complex manifolds, the two properties give precisely the type of sets on which Carleman approximation is possible (see for example \cite{Manne2011} and \cite{Chenoweth2019} for the case of totally real sets in Stein manifolds and approximation where the target manifold is Oka, respectively). The next definition is due to Castro-Infantes and Chenoweth (see \cite{CastroInfantes2020}).

\begin{defi}
	\label{carlad}
	A \emph{Carleman admissible set} $S$ in an open Riemann surface $\r$ is a closed $\o(\r)$-convex set with bounded exhaustion hulls of the form $S = K \cup E$ where $K$ is a union of a locally finite collection of pairwise disjoint compact domains with piecewise $\c^r$-smooth boundaries for some $r \in \N$ and $E = \overline{S\backslash K}$ is the union of a locally finite collection of pairwise disjoint smooth embedded arcs that intersect the boundary $bK$ only at their endpoints, if at all, and those intersections are transverse.
\end{defi}

A key part of the proof consists of using Mergelyan's theorem with jet interpolation in order to construct appropriate holomorphic functions on open Riemann surfaces. The background on this result as well as the proof is given in the survey \cite[Section 4]{Fornaess2018} on holomorphic approximation theory.

By using an induction procedure we are also able to obtain the next result on approximation in the fine Whitney topology and jet-interpolation of Legendrian curves defined on certain unbounded sets in open Riemann surfaces. Again we are able to approximate the given Legendrian curve with a proper holomorphic Legendrian embedding, provided the prescribed jets satisfy the obvious necessary condition. Note that in this case the approximation takes place in the $\c^0$-topology as opposed to the $\c^r$-topology as in Theorem \ref{mergelyan}. However, this is likely due to the method used in the proof and not some underlying obstruction.

\begin{thm}
	\label{carleman}
	Let $S = K \cup E$ be a Carleman admissible subset in an open Riemann surface $\r$ and let $f\colon  S \to \C^{2n+1}$ be a generalised Legendrian curve of class $\a^1(S)$. Suppose we are given:
	\begin{enumerate}[label=(\roman*)]
		\item a strictly positive continuous function $\varepsilon\colon  S \to (0,\infty)$;
		\item a closed discrete subset $\Lambda = \Lambda' \cup \Lambda''$ of $\r$ with $\Lambda' \subset \Int S = \Int K$ and $\Lambda'' \subset \r \backslash S$;
		\item a holomorphic Legendrian curve $\phi\colon O \to \C^{2n+1}$, defined on an open neighbourhood $O \subset \r \backslash S$ of the set $\Lambda''$ and
		\item a function $m\colon  \Lambda \to \N \cup \{0\}$.
			\end{enumerate}

		Then there exists a holomorphic Legendrian curve $F\colon  \r \to \C$ satisfying the following conditions:
		\begin{enumerate}[label=(\Roman*)]
			\item $\|F(q) - f(q)\| < \varepsilon(q)$ for all $q \in S$, i.\ e.\ $F$ is $\varepsilon$-close to $f$ in the fine $\c^0(S)$-topology, where $\|\cdot \|$ denotes the Euclidean norm on $\C^{2n+1}$,
			\item $F-f$ has a zero of order at least $m(q)$ at all points $q \in \Lambda'$, and
			\item $F-\phi$ has a zero of order at least $m(p)$ at $p$ for all $p \in \Lambda''$.
		\end{enumerate}
		
		Let $\tilde{f} \colon  S \cup O \to \C^{2n+1}$ be given by $\tilde{f}(q) = f(q)$ for $q \in S$ and $\tilde{f}(q) = \phi(q)$ for $q \in O$. If $\dd \tilde{f}(p) \neq 0$ for all $p \in \Lambda\cap \{m\geq 1\}$, then $F$ may be chosen an immersion. If, in addition, the map $\tilde{f}|_\Lambda$ is injective, then $F$ may be chosen injective. If $\tilde{f}|_{S \cup \Lambda''}$ is proper, then $F$ may be made proper.
\end{thm}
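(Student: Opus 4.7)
The plan is to apply Theorem \ref{mergelyan} inductively along a suitable compact exhaustion of $\r$. Using Lemma \ref{bddexhaustion}, first fix a normal exhaustion $\{K_j\}_{j\in\N}$ of $\r$ by compact $\o(\r)$-convex sets with the additional property that $K_j \cup S$ is $\o(\r)$-convex for each $j$; after a small perturbation one may assume, in addition, that $bK_j$ is piecewise $\c^r$-smooth, disjoint from $\Lambda$, transverse to the arcs of $E$, and that $\Lambda \cap K_j \subset \Int K_{j+1}$. Then each auxiliary set
\[
L_j := K_j \cup \overline{\bigl(S \cap (K_{j+1}\setminus K_j)\bigr)}
\]
is a Runge admissible subset of $\r$ to which Theorem \ref{mergelyan} applies.

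Next, construct inductively a sequence of entire holomorphic Legendrian curves $F_j\colon \r \to \C^{2n+1}$ and of positive tolerances $\delta_j$ with $\sum_j \delta_j$ small compared to $\inf_{S}\varepsilon$, satisfying
\begin{enumerate}[label=(\alph*)]
\item $\|F_{j+1} - F_j\|_{K_j} < \delta_j$;
\item $\|F_j - f\|_{S\cap K_j} < (1-2^{-j})\inf_{S\cap K_j}\varepsilon$;
\item $F_j - \tilde f$ has a zero of order at least $m(p)$ at every point $p \in \Lambda \cap K_j$.
\end{enumerate}
In the inductive step, define a generalised Legendrian curve $g_j$ of class $\a^r(L_j)$ that agrees with $F_j$ on $K_j$ and extends onto the new arc segments by $f$. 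The two pieces do not match at the finitely many transverse crossings $E \cap bK_j$, so along a short sub-arc adjacent to each crossing one replaces $f$ by a $\c^r$-smooth Legendrian patch obtained from a holomorphic Legendrian spray (constructed as in \cite{Alarcon2017}), making $g_j$ genuinely a generalised Legendrian curve of class $\a^r$. Applying Theorem \ref{mergelyan} to $g_j$ on $L_j$, with prescribed jets at $\Lambda \cap K_{j+1}$ and approximation tolerance $\delta_j$, produces $F_{j+1}$.

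Property (a) makes $\{F_j\}$ converge locally uniformly on $\r$, and the limit $F$ is Legendrian since the condition $F^*\alpha=0$ is closed under locally uniform convergence of holomorphic maps. A telescoping estimate from (a) and (b) yields $\|F(q) - f(q)\| < \varepsilon(q)$ for every $q \in S$, while (c) gives the prescribed jet interpolation on $\Lambda$ in the limit. The conclusions about immersion and injectivity follow from the corresponding clauses of Theorem \ref{mergelyan} applied at each step, and properness is enforced by first enlarging $\Lambda''$ to a closed discrete subset of $\r$ whose prescribed $\tilde f$-values exit every compact set in $\C^{2n+1}$.

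The principal technical obstacle is the glueing step that manufactures $g_j$ as a bona fide $\a^r$-generalised Legendrian curve on $L_j$ out of $F_j$ and $f$, which disagree near $E \cap bK_j$. Linear interpolation destroys the Legendrian condition because $\alpha$ is nonlinear in the fibre coordinates, so one is forced to use a Legendrian spray together with a careful $\c^r$ cut-off along each arc. Once this is in place, the remainder reduces to standard book-keeping with the exhaustion and the estimates above.
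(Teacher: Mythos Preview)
Your overall scheme is the right one and matches the paper's: choose a normal exhaustion via Lemma \ref{bddexhaustion}, at each stage glue the current approximation to $f$ along the finitely many arc-crossings $E\cap bK_j$ by a short Legendrian arc (in the paper this is \cite[Theorem A.6]{Alarcon2017}, not a spray), then apply the Mergelyan-type step to advance, and finally pass to a locally uniform limit. A few bookkeeping items need tightening: you must also arrange $bK_j\cap S\subset E\setminus K$ (so that no compact piece of $K$ straddles $bK_j$, otherwise $L_j$ need not be admissible); the quantity $\inf_S\varepsilon$ may well be $0$, so your tolerances must be tied to $\inf_{S\cap K_j}\varepsilon$ stage by stage; and for the immersion/injectivity of the limit you cannot fix the $\delta_j$ in advance---each $\delta_{j}$ must be chosen after $F_j$ so that any holomorphic map $\delta_j$-close to $F_j$ on $K_j$ is an immersion (respectively injective) there, exactly as in the paper.

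The genuine gap is your properness argument. Enlarging $\Lambda''$ so that the prescribed values leave every compact set only forces $F$ to take large values on a \emph{discrete} subset of $\r$; it gives no lower bound on $\|F\|$ elsewhere on $\r\setminus S$, so $F$ need not be proper. The locally uniform limit of proper holomorphic maps can fail to be proper, and interpolation at points cannot repair this. The paper proceeds differently: under the hypothesis that $\tilde f|_{S\cup\Lambda''}$ is proper one relabels the exhaustion so that $\{\,\|\tilde f\|_\infty\le j\,\}\subset K_j$ and then, at each inductive step, invokes Lemma \ref{bdenlarge} to produce $f_j$ with $\|f_j\|_\infty>j$ on $bK_j$ and $\|f_j\|_\infty>j-1$ on the whole annulus $K_j\setminus\Int K_{j-1}$. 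These uniform lower bounds on annuli survive the subsequent small perturbations and hence pass to the limit, forcing $F$ to be proper. You should replace the ``enlarge $\Lambda''$'' idea by this boundary-control step.
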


%%%%%%%%%%%%%%%%%%%%%%%%%%%%%%%%%%%%%%%%%%%%%%%%%%%%%%%%%%%%%%%%%%%%%%%%%%%%%%%%%%%%%%%%%%%%%%%%%%%%%%%%%%%%%%%%%%%%%%%%%%%%%%%%%%%%%%%%%%%%%%%%%%%%%%

\section{Mergelyan approximation with jet-interpolation}

In this section we prepare the necessary ingredients for the proofs of Theorems \ref{mergelyan} and \ref{carleman} which are explained in the next sections. We first show how to add interpolation to the approximation result for generalised Legendrian curves on admissible sets from \cite{Alarcon2017}. In particular, we show that a generalised Legendrian curve, defined on some admissible set in an open Riemann surface, may be approximated in the $\c^r$-topology with holomorphic Legendrian immersions while fixing it to a given finite order at a finite number of points inside its domain and prescribing a finite order holomorphic Legendrian Taylor polynomial for it at a finite order of points outside of its domain. Next, we show this kind result may be improved to approximation with embeddings. Finally, we show how to control the norm of the approximating curve at the boundary of some domain slightly larger than the starting admissible set, which will be a crucial step in proving that generalised Legendrian curves may be approximated with entire embedded Legendrian curves which interpolate it on a given closed dicrete set.

\subsection{Jet interpolation on admissible sets}
We begin by first showing that interpolation to a given finite order is possible on finite subsets in the interior of the domain of the approximated curve. The following is a version of \cite[Lemma 4.3]{Alarcon2017} with added interpolation.

\begin{lm}
	\label{finiteInterpolation}
	Let $S = K \cup \Gamma$ be an admissible subset in an open Riemann surface $\mathcal{R}$. Suppose $R$ is a compact domain with smooth boundary in $\mathcal{R}$ such that $S \subset \Int R$ and $R$ deformation retracts onto $S$. Let $\Lambda \subset \Int S = \Int K$ be a finite subset not intersecting $bK \cup \Gamma$, and let $m\colon \Lambda \to \N \cup \{0\}$ be a function. For any generalised Legendrian curve $f\colon S \to \C^{2n+1}$ of class $\mathcal{A}^r(S)$, $r\geq 1$, there exists a holomorphic Legendrian curve $F\colon  R \to \C^{2n+1}$, defined on some neighbourhood of $R$ in $\r$, which is $\mathcal{C}^r$-close to $f$ on $S$, such that the map $F-f$ has a zero of order at least $m(p)$ for all $p \in \Lambda$. If $\dd f(p) \neq 0$ for all $p \in \Lambda \cap \{m \geq 1\}$, then $F$ may be chosen to be an immersion. If for some $1\leq j \leq 2n+1$ the $j$-th coordinate function $\pr_j \circ f$ of $f$ in $\C^{2n+1}$ is already holomorphic on a neighbourhood of $R$, the approximating curve $F$ may be taken such that $\pr_j \circ F =\pr_j\circ f$.
\end{lm}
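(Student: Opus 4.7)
The plan is to follow the structure of \cite[Lemma 4.3]{Alarcon2017}, which proves the analogous statement without any jet-interpolation requirement, and to incorporate the constraints at the finite set $\Lambda\subset\Int K$ into the period-killing step of that argument. The key structural observation is that the Legendrian condition $f^*\alpha=0$ reads $\dd z=-\sum_{i=1}^n x_i\,\dd y_i$, so once the first $2n$ coordinates of $F=(\tilde x,\tilde y,\tilde z)$ are prescribed, the last coordinate $\tilde z$ is forced to be a primitive of $-\sum_i\tilde x_i\,\dd\tilde y_i$ on $R$. The proof therefore splits naturally into a componentwise holomorphic approximation followed by a period/value correction.

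For the first stage I would apply the classical Mergelyan theorem with jet interpolation on the open Riemann surface containing $R$ (see \cite[Section 4]{Fornaess2018}) to each scalar component $x_i,y_i$ of $f$ separately, obtaining holomorphic functions $\tilde x_i,\tilde y_i$ on an open neighbourhood of $R$ that are $\c^r$-close to $x_i,y_i$ on $S$ and such that $\tilde x_i-x_i$ and $\tilde y_i-y_i$ vanish to order at least $m(p)$ at every $p\in\Lambda$. The holomorphic $1$-form $\theta:=\sum_i\tilde x_i\,\dd\tilde y_i$ is then $\c^{r-1}$-close to $\sum_i x_i\,\dd y_i=-\dd z$ on $S$, and $\theta+\dd z$ vanishes to order at least $m(p)-1$ at each $p\in\Lambda$. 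Fixing a basepoint $p_1\in\Lambda$, a basis $C_1,\ldots,C_N$ of $H_1(R;\Z)\cong H_1(S;\Z)$ with representatives contained in $\Int S\setminus\Lambda$, and smooth paths $\gamma_2,\ldots,\gamma_k\subset\Int K\setminus\Lambda$ (away from endpoints) joining $p_1$ to the remaining points of $\Lambda$, the Legendrian/interpolation requirements on $F$ reduce to $L:=N+k-1$ scalar conditions, namely the vanishing of all components of
\[
\Phi(t):=\Bigl(\oint_{C_s}\theta^t,\;\int_{\gamma_j}\theta^t-\bigl(z(p_1)-z(p_j)\bigr)\Bigr)_{s,j}\in\C^L,
\]
where $\theta^t=\sum_i\tilde x_i^t\,\dd\tilde y_i^t$ is the $1$-form attached to a perturbation $(\tilde x^t,\tilde y^t)$ of $(\tilde x,\tilde y)$.

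The core step is to build a holomorphic spray $t\mapsto(\tilde x^t,\tilde y^t)$, $t$ in a neighbourhood of $0\in\C^L$, with $(\tilde x^0,\tilde y^0)=(\tilde x,\tilde y)$, such that (a) every perturbation $\tilde x_i^t-\tilde x_i$ and $\tilde y_i^t-\tilde y_i$ vanishes to order at least $m(p)$ at each $p\in\Lambda$, and (b) the differential $\dd\Phi|_{t=0}\colon\C^L\to\C^L$ is an isomorphism. Concretely, I would take linear perturbations $\tilde x_i^t=\tilde x_i+\sum_\ell t_\ell\,\xi_{\ell,i}$ and $\tilde y_i^t=\tilde y_i+\sum_\ell t_\ell\,\eta_{\ell,i}$ with holomorphic maps $\xi_\ell,\eta_\ell$ vanishing to the prescribed order at each point of $\Lambda$; since this jet-vanishing subspace is still infinite-dimensional and the $L$ period/path-integral functionals are carried by cycles and paths entirely contained in $\Int S\setminus\Lambda$, one can choose the $(\xi_\ell,\eta_\ell)$ with supports localized near pairwise disjoint subarcs of these cycles and paths, so that their images under $\dd\Phi|_{t=0}$ are linearly independent. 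Since $\Phi(0)$ is small (the periods of $-\dd z$ vanish because $f$ is Legendrian on $S$, and the corresponding path integrals give exactly $z(p_1)-z(p_j)$), the implicit function theorem yields a small $t^*\in\C^L$ with $\Phi(t^*)=0$. Setting $(\tilde x^*,\tilde y^*):=(\tilde x^{t^*},\tilde y^{t^*})$ and $\tilde z(q):=z(p_1)-\int_{p_1}^q\theta^{t^*}$ (well-defined on $R$ by exactness), the map $F:=(\tilde x^*,\tilde y^*,\tilde z)$ is holomorphic Legendrian on a neighbourhood of $R$, $\c^r$-close to $f$ on $S$, satisfies $F(p)=f(p)$ at each $p\in\Lambda$, and matches all higher jets up to order $m(p)$ there (the latter from the relation $\dd\tilde z+\theta^{t^*}=0$ together with the vanishing order of $\theta^{t^*}+\dd z$ at the points of $\Lambda$).

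The supplementary claims follow without difficulty. At each $p\in\Lambda\cap\{m\geq 1\}$ the equality $\dd F(p)=\dd f(p)\neq 0$ is automatic; at the remaining points of $R$, a small further generic perturbation inside the same spray ensures $\dd F\neq 0$ everywhere, since being non-immersed is a positive-codimension condition. If a coordinate $\pr_j\circ f$ is already holomorphic on a neighbourhood of $R$, I would fix that coordinate throughout (for $j\leq 2n$ simply restrict the spray to perturbations of the remaining components; for $j=2n+1$ take $\tilde z=\pr_{2n+1}\circ f$ from the start and use the spray to enforce $\sum_i\tilde x_i^t\,\dd\tilde y_i^t=-\dd\tilde z$), leaving ample freedom to realize the $L$ conditions. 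I expect the main technical obstacle to be the verification of~(b), namely that $\dd\Phi|_{t=0}$ remains surjective after restricting to perturbations vanishing to prescribed order at $\Lambda$; this is the only genuinely new ingredient beyond \cite[Lemma 4.3]{Alarcon2017} and rests on the combinatorial flexibility of placing the homology basis and auxiliary paths inside $\Int S\setminus\Lambda$ so that jet-vanishing perturbations with localized supports still produce a full-rank period matrix.
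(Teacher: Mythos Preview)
Your overall strategy---Mergelyan approximation of the first $2n$ components with jet interpolation at $\Lambda$, followed by a finite-dimensional spray of jet-vanishing perturbations to kill the periods of $\theta$ over a homology basis and to force the correct path-integrals to the points of $\Lambda$, then integration to recover $\tilde z$---is exactly the paper's approach, and your identification of condition~(b) as the only genuinely new ingredient is accurate.

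There are, however, two places where your sketch is incomplete.

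\textbf{Immersivity.} The claim that ``a small further generic perturbation inside the same spray ensures $\dd F\neq 0$ everywhere'' does not work as stated: the spray has exactly $L$ parameters and the implicit function theorem pins down a unique $t^*$ solving $\Phi(t^*)=0$, so no freedom remains for a genericity argument. The paper instead introduces a \emph{separate} parameter $\delta$ and an auxiliary holomorphic function $\eta$ (produced by Weierstrass-type interpolation) so that $x_1^\delta:=x_1+\delta\eta$ satisfies $\dd x_1^\delta\neq 0$ at every zero of $\dd y_1$ in $R$ for all $\delta\neq 0$, while $\eta$ vanishes to the required order at each point of $\Lambda$. Crucially, the spray functions $g_j,h_p$ are also forced to have vanishing first derivative at the zeros of $\dd y_1$, so that immersivity of $(\tilde x_1^\delta(\cdot,\zeta,\xi),y_1)$ persists for \emph{all} spray parameters, not merely generic ones. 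This is a genuine extra step you would need to add.

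\textbf{Fixing the $z$-component.} Your one-line treatment of the case $j=2n+1$ (``take $\tilde z=\pr_{2n+1}\circ f$ and use the spray to enforce $\sum_i\tilde x_i^t\,\dd\tilde y_i^t=-\dd\tilde z$'') asks the spray to realise a pointwise identity of $1$-forms, which is infinitely many conditions and cannot be achieved by a finite-dimensional perturbation. The correct manoeuvre, carried out in the paper, is to solve instead for one of the $y$-coordinates, say $y_1$, by integrating $\beta/x_1$ where $\beta=-\dd z-\sum_{i\geq 2}x_i\,\dd y_i$; this forces one first to approximate $x_1$ so that its zeros in $R$ match (with at most the same order) the zeros of $\beta$, and the spray must be taken in \emph{multiplicative} form $\tilde x_1=x_1\exp\bigl(\sum_j\zeta_jg_j+\sum_p\xi_ph_p+\delta\eta\bigr)$ to preserve that zero structure. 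The period and value functionals are then integrals of $\beta/\tilde x_1$ rather than of $\tilde x\,\dd\tilde y$. This case is substantially more delicate than your sketch suggests.
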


\begin{rmk}
	Let $h\colon  \mathcal{R} \to (\C^3,\alpha)$ be a holomorphic Legendrian curve with components \linebreak $f=(x,y,z)\colon u \mapsto (x(u),y(u),z(u))$. From the Legendrian condition
	$$
	\dd z +x\dd y= 0, \quad \text{i.\ e.\ } \quad \dot{z}=-x\dot{y}
	$$
	it follows that for any Legendrian curve $g=(x',y',z')\colon  \mathcal{R} \to \C^3$ the curve $f$ has the same $m$-jet at $p \in \mathcal{R}$ as $g$, if $z(p) = z'(p)$ and the maps $(x,y),(x',y')\colon  \mathcal{R} \to \C^2$ have the same $m$-jet at $p$.	Moreover, such a Legendrian curve is immersed if and only if its $(x,y)$-projection is.
\end{rmk}

\begin{rmk}
	\label{cntMf}
	It follows from the Legendrian condition that a Legendrian curve in $\C^{2n+1}$ is nonconstant if and only if one of its $x_i$ or $y_i$ components is nonconstant. Denote the coordinates on $\C^{2n+1}$ by $x_i, y_i, z$, $i=1,\ldots,n$. Note that the maps
	\begin{align*}
		c_1\colon 
		(x_1,\ldots,x_n,y_1,\ldots,y_j,\ldots,y_n,z) & \mapsto (x_1,\ldots,x_n,y_j,\ldots,y_1,\ldots,y_n,z)\\
		c_2\colon 
		(x_1,\ldots,x_n,y_1,\ldots,y_n,z) &\mapsto (x_1,\ldots,x_n,-y_1,\ldots,y_n,z+x_1y_1)
	\end{align*}
	are $\C$-linear isomorphisms of $\C^{2n+1}$. It is immediate that the 1-forms
	\begin{align*}
		\alpha_1 &=
		\dd z + x_1 \dd y_j + x_j \dd y_1 +
		\sum_{i \notin \{1,j\}} x_i \dd y_i \\
		\alpha_2 &=
		\dd z + y_1 \dd x_1 + \sum_{i\neq 1} x_i \dd y_i
	\end{align*}
	are contact forms and $c_i^* \alpha_i = \alpha$ holds. When needed, we may thus interchange the roles of $x_i$ and $x_j$, $y_i$ and $y_j$, or $x_i$ and $y_j$. 
\end{rmk}

\begin{proof}[Proof of Lemma \ref{finiteInterpolation}]
	We assume $R$ (and hence its deformation retract $S$) is connected, otherwise we construct the approximating curve on each connected component of $R$ separately. Moreover, since the curve we wish to construct only needs to be defined on some neighbourhood of $R$, we may assume $\r$ deformation retracts onto $R$, thus $R$ is Runge in $\r$.
	
	Write $f \colon S \to \C^{2n+1}$ in coordinates as $f=(x_1',\ldots,x_n',y_1',\ldots,y_n',z') = (\bx', \by', z')$. We follow the proof of \cite[Lemma 4.3]{Alarcon2017} with a minor modification to the construction of the holomorphic spray, ensuring the interpolation condition for all suitable parameters. We will thus construct a holomorphic spray $\tilde{x}_1\colon  \r \times \C^s \times \C^N \to \C^s \times \C^N \ni (\zeta,\xi)$, $s$ and $N$ to be defined later on, of the form
	
	$$
	\tilde{x}_1(u,\zeta,\xi) = 
	x_1(u) + 
	\sum_{j=1}^{s}\zeta_j g_j(u)+
	\sum_{p\in \Lambda}\xi_{p}h_{p}(u)
	+ \delta w(u),
	\quad
	u \in \r,
	$$
	where $x_1$ is a Mergelyan approximation of the $x_1'$-component of $f$, $g_j$'s will be taking care of correcting the periods over a suitably chosen family of curves generating the homology basis of $S$ (and thus of $R$ and $\r$) so that the 1-form $\bx \dd \by$ becomes exact, $h_p$'s will correct the integrals of the 1-form $\bx \dd\by$ in order to interpolate the $z'$-component, and finally $w$ will get rid of (some) zeroes of the 1-form $\dd x_1$ in order to make the approximating map an immersion. Here, $\delta$ will be a suitably chosen parameter $\delta \in \C^* = \C \backslash \{0\}$.

	%%%%%%%%%%%%%%%%%%%%%%%%%%%%%%%%%%%%%%%%%%%%%%%%%%%%%%%
	
	%%			CORRECTION OF THE x-COMPONENT			 %%
	
	%%%%%%%%%%%%%%%%%%%%%%%%%%%%%%%%%%%%%%%%%%%%%%%%%%%%%%%
	
	\textbf{Step 1:} Since $S$ is a deformation retract of $\r$ it is Runge in $\r$, thus we may use Mergelyan's theorem with jet-interpolation (see e.\ g.\ \cite[Theorem 16]{Fornaess2018} and \cite[Theorem 1.12.11]{Alarcon2021}) to approximate the coordinate functions $ \by'=(y_1',\ldots,y_n') \colon  S \to \C^n$ with holomorphic functions $\by = (y_1,\ldots,y_n) \colon  \mathcal{R} \to \C^n$ in the $\c^r(S)$-topology such that for all $j = 1,\ldots,n$ and all $p \in \Lambda$:
	$$
	(y_j')^{(k)}(p) - y_j^{(k)}(p) = 0, \quad \text{for all} \; k=0,\ldots, m(p).
	$$
	Do the same for $\bx' = (x_1',\ldots,x_n')$ and obtain $\bx = (x_1,\ldots,x_n) \colon  \r \to \C^n$. If $y_1$ is constant, we may add a point $q \in \r \backslash S$ to $S$, obtaining the still admissible set $S \cup \{q\}$ in $\r$ with no holes, and prescribe a different value for the $y_1'$ component at $q$, thus obtaining a nonconstant approximating function $y_1$.
	
	%%%%%%%%%%%%%%%%%%%%%%%%%%%%%%%%%%%%%%%%%%%%%%%%%%%%%%%

	%%%%%%%%%% CORRECTING x TO OBTAIN IMMERSION %%%%%%%%%%%

	%%%%%%%%%%%%%%%%%%%%%%%%%%%%%%%%%%%%%%%%%%%%%%%%%%%%%%%
%	, the Remark \ref{cntMf}, and requiring the approximation from Step 1 to be good enough, we may assume that
	
	Suppose now that every point $p \in \Lambda \cap \{m \geq 1\}$ is a regular point of $f$, meaning that $\dd f_p$ is nonvanishing for all $p \in \Lambda \cap \{m\geq 1\}$. This is a necessary condition if we wish to approximate $f$ with an \emph{immersion}, interpolating $f$ to a given finite order at the points $p \in \Lambda$. By the previous argument the function $y_1\colon \r \to \C$ is nonconstant and thus the set of zeroes of $\dd y_1$ is discrete. Moreover, since $R$ is relatively compact, the set $Z:=R \cap \{\dd y_1 = 0\}$ is finite and by slightly enlarging $R$ if necessary we may assume $bR \cap \{\dd y_1 =0\}$ is empty. If the last condition in the theorem holds ($f$ is an immersion on some neighbourhood of $\Lambda\cap \{m\geq 1\}$), we need only correct the approximating map $(\bx,\by)$ outside of $\Lambda \cap \{m\geq 1\}$, that is on $Z' := Z \backslash (\Lambda\cap \{m \geq 1\})$. Choose a holomorphic function $\eta \colon  \r \to \C$ such that
	\begin{enumerate}[label=\roman*)]
		\item \label{ena} $\eta^{(k)}(p) = 0$ for all $p \in \Lambda \cap \{m\geq 1\}$, $k = 0,\ldots,m(p)$,
		\item \label{dva} $\dd\eta(p) \neq 0$ if $p \in Z'$ and $\dd x_1(p) = 0$,
		\item \label{tri} $\dd\eta(p) = 0$ if $p \in Z'$ and $\dd x_1(p) \neq 0$.
	\end{enumerate}
	Such a function exists by Florack's result on Weierstrass-type approximation of holomorphic functions with jet-interpolation on open Riemann surfaces, see \cite{Florack1948} and \cite[Theorem 1.12.14]{Alarcon2021} for more details. Note that a suitably chosen neighbourhood of the set $Z'$ consists of finitely many disjoint disks and is thus admissible in $\mathcal{R}$.
	
	Define $x_1^\delta \colon  \mathcal{R} \to \C$ by
	$$
	x_1^\delta(u) = x_1(u) + \delta \eta(u), \quad \delta \in \C^*.
	$$
	For all $\delta \neq 0$ we have $\dd x_1^\delta(p) \neq 0$ whenever $\dd y_1(p) = 0$ for all $p \in R$ by properties \ref{dva} and \ref{tri}, thus the map 
	\[
	(x_1^\delta, y_1,x_2,y_2,\ldots,x_n,y_n) \colon  \mathcal{R} \to \C^{2n}
	\]
	is an immersion on $R$ for all $\delta \in \C^*$. If the parameter $\delta$ is chosen small enough, the map $x_1^\delta$ with $\delta$ now fixed approximates well the map $x_1$ in the $\c^r(S)$-topology. If the immersivity condition is not needed, we put $\delta = 0$.

	%%%%%%%%%%%%%%%%%%%%%%%%%%%%%%%%%%%%%%%%%%%%%%%%%%%%%%%

	%%	CONSTRUCTION OF HOMOLOGY BASIS APPROXIMATING SET %%

	%%%%%%%%%%%%%%%%%%%%%%%%%%%%%%%%%%%%%%%%%%%%%%%%%%%%%%%
	
	\textbf{Step 2:} 
	By \cite[Lemma 1.12.10]{Alarcon2021} there exists a connected Runge admissible set $C$ in $S$ consisting of finitely many closed curves $C_1,\ldots, C_s \subset S$ generating the first homology group of $S$ (that is, $\iota_*\colon H_1(C) \to H_1(S)$ is an isomorphism, where $\iota\colon  C \to S$ is the inclusion). Furthermore, the lemma ensures each curve $C_i$ contains a nontrivial Jordan arc $\widetilde{C}_i$ not intersecting $\cup_{j\neq i}C_j$. Since $bS \cap \Lambda = \emptyset$ by assumption, we may slightly deform the curves $C_i$, so that $C$ does not intersect $Z \cup\Lambda$.
	
	Choose a smoothly bounded relatively compact neighbourhood $U \subset \r$ of the set $C$ containing no point in $Z \cup \Lambda$, such that $U$ deformation retracts onto $C$.	Fix a $p_0 \in U$. For every $p \in Z \cup \Lambda$ choose a smoothly bounded closed embedded disk $\Omega_p \subset \mathcal{R}\backslash \overline{U}$, centered at $p$ such that $\Omega_p \cap \Omega_q = \emptyset$ for all $p,q \in Z \cup \Lambda$, $p \neq q$, and $\Omega_p \subset (\Int K) \cup (\r \backslash S)$. Set $\Omega = \cup_{p \in Z \cup \Lambda} \Omega_p$. For each $p \in Z\cup\Lambda$ choose a smooth embedded arc $E_p$, connecting the point $p_0$ to the point $p$ such that
	\begin{enumerate}[label = \roman*)]
		\item $E_p$ and $b\overline{U}$ intersect transversely and in a single point, which we label $u_p$; 
		\item $E_p$ and $b\Omega_p$ intersect transversely and in a single point, which we label $\omega_p$;
		\item $(\Omega_p \cup E_p) \cap (\Omega_q \cup E_q) = \emptyset$ for any distinct points $p,q \in Z\cup \Lambda$.
	\end{enumerate}
	
	Let $C' = \overline{U} \cup (\cup_{p \in Z\cup\Lambda}E_p \cup \Omega_p)$. By replacing $E_p$ with the subarcs $E_p'$, connecting $u_p$ to $\omega_p$, we obtain a compact admissible set, still denoted $C'$, which deformation retracts onto $\overline{U}$ and hence is Runge in $\r$. Note that $\Lambda \subset \Int C'$.
	
	%%%%%%%%%%%%%%%%%%%%%%%%%%%%%%%%%%%%%%%%%%%%%%%%%%%%%%%
	
	%%		CONSTRUCTION OF PERIOD CORRECTING FN'S		%%%
	
	%%%%%%%%%%%%%%%%%%%%%%%%%%%%%%%%%%%%%%%%%%%%%%%%%%%%%%%
	
	\textbf{Step 3:}
	Construct smooth functions $g_j\colon C\to \C$ with support in $\widetilde{C}_j$ for $j=1,\ldots,s$ satisfying
	\begin{enumerate}[label=G$_{\arabic*}$:, ref=G$_{\arabic*}$]
		\item \label{intg=delta} $\int_{C_i}g_j\dd y_1 = \delta_{ij}$ for all $i,j = 1,\ldots,s$.
	\end{enumerate}
	Use Mergelyan's theorem (recall that $C$ is Runge in $S$ and thus also in $\r$) to approximate $g_j$ with a holomorphic function on $\r$, still denoted by $g_j$, such that $\int_{C_i}g_jdy \approx \delta_{ij}$. For each $j$ extend $g_j|_{\overline{U}}$ to a function $g_j'$ of class $\a^r(C')$ such that
	\begin{enumerate}[label=G$_{\arabic*}$:, ref=G$_{\arabic*}$]
		\setcounter{enumi}{1}
		\item \label{gom=0} $g_j'|_{\Omega_p} \equiv 0$ for all $p \in Z \cup \Lambda$ and
		\item \label{gz=0} $\int_{E_p}g_j '\dd y_1 = 0$.
	\end{enumerate}

	Since $g_j'$ is constant on $\Omega$, it is holomorphic on $\Int C'$ and is thus of class $\a^r(C')$. Since $C'$ is an admissible Runge subset of $\r$, by using Mergelyan's theorem we may approximate $g_j'$ in $\c^r(C')$-topology with a holomorphic function $\tilde{g}_j\colon \r \to \C$, interpolating to order $m(p)$ at all points $p \in \Lambda \cap \{m\geq 1\} \subset \Int C'$ and to order at least 1 at the points $p \in Z'\cup (\Lambda \cap\{m=0\}) \subset \Int C'$. In other words, the following conditions hold for $j = 1, \ldots, s$:
	\begin{enumerate}[label=G$_{\arabic*}$:, ref=G$_{\arabic*}$]
		\setcounter{enumi}{3}
		\item \label{dgjp=0} $\tilde{g}_j(p)=0$ and $\dd \tilde{g}_j(p) = 0$ for $p \in Z'\cup (\Lambda \cap\{m=0\})$,
		\item \label{glam=0} $\tilde{g}_j^{(k)}(p) = 0$ for all $p \in \Lambda\cap \{m\geq 1\}$, $k = 0,\ldots m(p)+1$.
	\end{enumerate}
	%%%%%%%%%%%%%%%%%%%%%%%%%%%%%%%%%%%%%%%%%%%%%%%%%%%%%%%
	
	%%		CONSTRUCTION OF JET-CORRECTING FN'S			 %%
	
	%%%%%%%%%%%%%%%%%%%%%%%%%%%%%%%%%%%%%%%%%%%%%%%%%%%%%%%

	\textbf{Step 4:}
	Let $\Omega = \cup_{p \in Z \cup \Lambda} \Omega_p$. Construct functions $h_p \colon  C' \to \C$ of class $\mathcal{A}^r(C')$ for $p \in \Lambda$, satisfying the conditions:
	\begin{enumerate}[label=H$_{\arabic*}$:, ref=H$_{\arabic*}$]
		\item \label{hom=0} $h_p \equiv 0$ on $\overline{U} \cup \overline{\Omega}$,
		\item \label{hz=1} $\int_{E_p}h_q \dd y_1 = \delta_{pq}$ for all $p,q \in \Lambda$.
		%\item $\int_{E'_p} h_q dy_1 = 0$ for all $p \in Z' \backslash \Lambda$ and $q \in \Lambda$.
	\end{enumerate}
	Note that $h_p$ is constant on $\Int C' = U \cup \Omega$ and is thus holomorphic there. Again, by using Mergelyan's theorem and the fact that $C'$ is Runge in $\mathcal{R}$, we may assume $h_p$ are holomorphic on $\mathcal{R}$, while the above equalities hold approximately and as closely as desired. Using jet-interpolation we ensure
	\begin{enumerate}[label=H$_{\arabic*}$:, ref=H$_{\arabic*}$]
		\setcounter{enumi}{2}
		\item \label{dhp=0} $h_p(q) = 0$ and $\dd h_p(q) = 0$ for all $p \in \Lambda$,  $q \in Z' \cup (\Lambda \cap \{m=0\})$.
		\item \label{hlam=0} $h_p^{(k)}(q) = 0$ holds for all $p \in \Lambda$, $q \in \Lambda \cap \{m\geq 1\}$ and $k = 0,1,\ldots,m(p)+1$,
	\end{enumerate}

	%%%%%%%%%%%%%%%%%%%%%%%%%%%%%%%%%%%%%%%%%%%%%%%%%%%%%%%
	
	%%				CONSTRUCTION OF THE SPRAY			 %%
	
	%%%%%%%%%%%%%%%%%%%%%%%%%%%%%%%%%%%%%%%%%%%%%%%%%%%%%%%	
	\textbf{Step 5:}
	Define the spray
	$$
	\tilde{x}_1(u,\zeta,\xi) = 
	x_1(u) + 
	\sum_{j=1}^{s}\zeta_j \tilde{g}_j(u)+
	\sum_{p\in \Lambda}\xi_{p}h_{p}(u), 
	\quad u \in \r, 
	\quad \zeta \in \C^s,
	\quad \xi \in \C^{|\Lambda|},
	$$
	where $\zeta = (\zeta_1,\ldots,\zeta_s)$ and $\xi = (\xi_p)_{p\in \Lambda}$. Denote by $\tilde{\bx}(\zeta,\xi)$ the map
	$$
	\tilde{\bx}(\zeta,\xi) = (\tilde{x}_1(\cdot,\zeta,\xi),x_2,\ldots,x_n) \colon  \mathcal{R} \to \C^n.
	$$
	By replacing $x_1$ with $x_1^\delta$ as constructed in Step 1 we obtain the spray $\tilde{x}_1^\delta\colon \mathcal{R}\times \C^s \times \C^{|\Lambda|} \to \C$:
	$$
	\tilde{x}_1^\delta(u,\zeta,\xi) = 
	x_1^\delta(u) + 
	\sum_{j=1}^s \zeta_j\tilde{g}_j(u)+
	\sum_{p\in \Lambda}\xi_{p}h_{p}(u), 
	\quad u \in \r, 
	\quad \zeta \in \C^s,
	\quad \xi \in \C^{|\Lambda|}.
	$$
	Since $\dd y_1$ is nonvanishing on $bR$ by construction, there exists a neighbourhood $\r'$ of the compact set $R$ such that all the zeroes of $\dd y_1$ in $\r'$ lie in $\Int R$, i.\ e.\ we have 
	\[\{u \in \r': \; \dd y_1(u) = 0\} = Z.\]
	It follows that for all $\zeta, \xi \in \C$ the map
	$$
	\r'\backslash (\Lambda \cap \{m\geq 1\}) \ni u \mapsto (\tilde{x}_1^\delta(u,\zeta,\xi),y_1(u)) \in \C^2
	$$
	is an immersion: for all $u \in \r' \backslash (\Lambda \cap \{m\geq 1\})$ either $\dd y_1(u) \neq 0$ or $u \in Z'$, but then
	$$
	\dd \tilde{x}_1^\delta(u,\zeta,\xi) = \dd x_1^\delta(u) +
	\sum_{j=1}^s \zeta_j\dd\tilde{g}_j(u)+
	\sum_{p\in \Lambda}\xi_{p}\dd h_{p}(u)
	= \dd x_1^\delta(u)
	\neq 0,
	$$
	since $\dd\tilde{g}_j(u) = 0$ for $j=1,\ldots,s$ by \ref{dgjp=0} and $\dd h_p(u) = 0$ for all $p \in \Lambda$ by \ref{dhp=0}. Moreover, conditions \ref{glam=0} and \ref{hlam=0} ensure $\tilde{x}_1^\delta(\cdot, \zeta,\xi)\colon \r \to \C$ agrees with $x_1^\delta\colon  S \to \C$ to order at least $m(p)$ at all points $p \in \Lambda$ for all $(\zeta,\xi) \in \C^s \times \C^{|\Lambda|}$, thus the map $\r' \ni u \mapsto (\tilde{\bx}^\delta(u,\zeta,\xi),\by(u)) \in \C^{2n}$ with $\delta \neq 0$ is an immersion for all parameters $(\zeta,\xi) \in \C^s\times \C^{|\Lambda|}$ if and only if the last condition in the theorem holds (i.\ e.\ $f$ is an immersion near $p \in \Lambda \cap \{m\geq 1\}$).
	
	Define the extended period map $\widetilde{\mathcal{P}} = (\mathcal{P}, \mathcal{Z}) \colon  \mathcal{A}^r(S)^{2n} \to \C^s \times \C^{|\Lambda|}$ as follows.	Let the $\mathcal{P}$-component of $\widetilde{\mathcal{P}}$ be given by the the period map $\mathcal{P}\colon \mathcal{A}^r(S)^{2n} \to \C^s$ with components $\mathcal{P}_i$:
	$$
	\mathcal{P}_i\colon 
	\mathcal{A}^r(S)^{2n} \ni (\psi_1,\ldots,\psi_n,\nu_1,\ldots,\nu_n) = (\psi,\nu) \mapsto \mathcal{P}_i(\psi,\nu) = 
	\sum_{j=1}^n\int_{C_i}\psi_j \, \dd\nu_j.
	$$
	Let the map $\mathcal{Z}\colon  \mathcal{A}^r(S)^{2n} \to \C^{|\Lambda|}$ be given in components $\mathcal{Z}_p \colon  \mathcal{A}^r(S)^{2n} \to \C$ by:
	$$
	\mathcal{Z}_p(\psi, \eta) = z'(p) - z'(p_0) + \sum_{j=1}^n\int_{E_p}\psi_j \,\dd\nu_j, \quad p \in \Lambda,
	$$
	The map
	\begin{equation*}
		\mathcal{S}\colon \label{xspray}
		(\zeta, \xi) \mapsto \tilde{\mathcal{P}}(\tilde{\bx}(\cdot,\zeta,\xi), \by)
	\end{equation*}
	is affine and its derivative at $(\zeta,\xi) = (0,0)$ is the identity map since $\partial_\zeta \mathcal{P}(\tilde{\bx}(\cdot,0,0), \by) = \id_s$ by \ref{gom=0}, $\partial _\xi \mathcal{P}(\tilde{\bx}(\cdot,0,0), \by) = 0$ by \ref{hom=0}, $\partial_\zeta \mathcal{Z}(\tilde{\bx}(\cdot,0,0), \by)) = 0$ by \ref{gz=0} and $\partial_\xi \mathcal{Z}(\tilde{\bx}(\cdot,0,0), \by) = \id_{|\Lambda|}$ by \ref{hz=1}. Depending on how well the approximations in Step 1 are chosen the value $\mathcal{S}(0,0)$ lies arbitrarily close to the origin, hence, there exists a unique point $(\zeta_0,\xi_0) \in \C^s \times \C^{|\Lambda|}$ close to the origin satisfying $\mathcal{S}(\zeta_0,\xi_0) = 0$. The map

	\begin{equation*}
		\mathcal{S}^\delta\colon  \label{x''spray}
		(\zeta, \xi) \mapsto \tilde{\mathcal{P}}(\tilde{\bx}^\delta(\cdot,\zeta,\xi), \by),
	\end{equation*}
	depending holomorphically on $\delta \in \C$, is still affine, with $\mathcal{S}^0 = \mathcal{S}$. By the implicit function theorem the point $(\zeta_0,\xi_0)$ depends holomorphically on $\delta$, thus it can be also made to lie arbitrarily close to the origin by choosing the parameter $\delta$ close enough to zero.
	
	The vanishing of the $\mathcal P$-component at $(\zeta_0,\xi_0)$ implies that the holomorphic 1-form
	$$
	\tilde{x}^\delta_1(\zeta_0,\xi_0)\,\dd y_1 + x_2\,\dd y_2 + \cdots + x_n\,\dd y_n
	$$
		is exact and hence integrates to the $z$-component of the Legendrian curve $F\colon \mathcal{R} \to \C^{2n+1}$ approximating $f$. The map $\mathcal{Z}$ also being zero at $(\zeta_0,\xi_0)$ then ensures the condition $z'(p)=z(p)$ at all points $p \in \Lambda$. By construction, the jets of $F$ then match the jets of $f$ at the points $p \in \Lambda$ to order at least $m(p)$. If the last condition in the theorem holds, one may choose $\delta \neq 0$ making $F$ an immersion.
	
	Finally, we explain how to fix a particular component function. Suppose one wishes to keep either $x_j'$ or $y_j'$ for some $j$. By Remark \ref{cntMf} we may assume this component is in fact $y_2'$ and one may just define $y_2 = y_2'$ instead of approximating in Step 1, thus fixing this particular component. Note that $y_2'$ is chosen instead of $y_1'$ since we require $y_1$ to be made nonconstant while posing no restrictions on $y_2$.
	
	It rests to prove one may also fix the $z'$-component. In Step 1, put $z=z'$ and assume $x_1'$ is nonconstant (if it is, approximate it by using Mergelyan's theorem with a nonconstant function having the correct $m(p)$-jets at every point $p \in \Lambda$). The set of zeroes $Z_x$ of $x_1'$ is thus finite in $R$. Now, choose a holomorphic function $h\colon \r \to \C$ which has the same zeroes as $x_1$ in $R$ and such that those zeroes are of the same order. By Mergelyan's theorem there exists a holomorphic function $g\colon  \r \to \C^*$ approximating $x_1'/h$ and agreeing with $x_1'/h$ to order at least $m(p)$ at every point $p \in \Lambda$, thus the holomorphic function $x_1 := hg$ approximates well the function $x_1'$, agrees with $x_1'$ to order at least $m(p)$ at every point $p \in \Lambda$, and has the same zeroes as $x_1'$ in $R$ which are of the same order as those of $x_1'$. Moreover, if $m(p)=0$ and $x_1'(p)=0$ for some $p \in \Lambda$, we require that $x_1$ has nonvanishing derivative at $p$.
	
	Approximate $x_i',y_i'$ for $i \neq 2$ with nonconstant entire holomorphic functions $x_i,y_i\colon \r \to \C$ such that $x_i,y_i$ agree with $x_i',y_i'$ respectively to order at least $m(p)$ at every point $p \in \Lambda$. Furthermore, we may ensure the holomorphic 1-form $\beta' = - \dd z - \sum_{i\neq 2}x_i\dd y_i$ is equal to zero at $p \in R$ whenever $x_1(p) =0$ and the order of the zero of $\beta'$ at $p$ is at least the order of the zero of $x_1$ at $p$, hence the 1-form $\beta = \beta'/x_1$ is holomorphic on a neighbourhood of $R$.
	
	In order to make the approximating map an immersion, we construct the function $\eta$ similarly as in Step 1, but now we make sure the map $(x_1e^{\delta\eta}, y_1)\colon  \r \to \C^2$ has nonvanishing derivative on a neighbourhood of $R$ for any $\delta \in \C^*$ instead. Note that for $p \in \Lambda$ with $m(p) =0$, the function $x_1$ has nonvanishing derivative at $p$, hence so does $x_1e^{\delta \eta}$ by requiring $\dot{\eta}(p) = 0$.
	
	In this case, Step 2 is the same as above, thus proceed to Step 3. We similarly construct the functions $g_j$, but we now require that
	\[
	\int_{C_i} g_j \beta = \delta_{ij} \quad
	\text{and} \quad
	\int_{E_p} g_j \beta = 0.
	\]
	Of the functions $h_p$ in Step 4 we require that
	\[
	\int_{C_i} h_p \beta = 0 \quad
	\text{and} \quad
	\int_{E_p} h_q \beta = \delta_{pq}.
	\]
	The spray we construct in Step 5 in this case is of the form
	\[
	\tilde{x}_1(u,\zeta,\xi) = x_1(u) \exp \left(\sum_{i=1}^s\zeta_j g_j(u) + \sum_{p \in \Lambda}\xi_p h_p(u) + \delta \eta(u)\right).
	\]
	Define $\tilde{\beta}(u,\zeta,\xi) = \beta'(u)/\tilde{x}_1(u,\zeta,\xi)$. The $\p_i$ and $\mathcal{Y}_p$ components of the map
	\[\mathcal{S}\colon  \C^s \times \C^{|\Lambda|} \to \C^s \times \C^{|\Lambda|}\]
	are given by
	\[
	\p_i(\zeta,\xi) = \int_{C_i} \tilde{\beta}(\cdot,\zeta,\xi), \quad
	\mathcal{Y}_p(\zeta,\xi) = y_1'(p_0)- y_1'(p) + \int_{E_p} \tilde{\beta}(\cdot,\zeta,\xi).
	\]
	The same argument as before shows that $\tilde{\beta}(\cdot,\zeta_0,\xi_0)$ is exact  and that the $\mathcal{Y}_p$ components vanish for some $(\zeta_0,\xi_0)$. We have thus obtained a Legendrian immersion by integrating $\tilde{\beta}$ to the function $y_1$ on a neighbourhood of $R$.

\end{proof}

%\begin{todo}
%	Explain the relation between approximation of the $x$-component and the neighbourhood of $0 \in \C^{s}\times \C^{|\Lambda|}$, where the suitable $(\zeta,\xi)$ parameter is found.
%	
%	
%\end{todo}

%\begin{rmk}
%	If $S$ is \emph{not} a deformation retract of $\mathcal{R}$ but is still Runge, the theorem gives Mergelyan approximation of Legendrian curves with interpolation on neighbourhoods of $S$ in $\mathcal{R}$. If one wishes to extend such approximating curves to entire ones one would need to control the periods over closed curves in a chosen homology basis for $\mathcal{R}$ whereas in the theorem we only control the periods over the homology basis for $S$ since the two first homology groups are assumed to be isomorphic.
%\end{rmk}

%%%%%%%%%%%%%%%%%%%%%%%%%%%%%%%%%%%%%%%%%%%%%%%%%%%%%%%%%%%%%%%%%%%%%%%%%%%%%%%%%%%%%%%%%%%%%%%

%%%%%%%%%%%%%%%%%				ENLARGING THE ADMISSIBLE SET		%%%%%%%%%%%%%%%%%%%%%%%%%%%

%%%%%%%%%%%%%%%%%%%%%%%%%%%%%%%%%%%%%%%%%%%%%%%%%%%%%%%%%%%%%%%%%%%%%%%%%%%%%%%%%%%%%%%%%%%%%%%

The next lemma shows that one may still use Lemma \ref{finiteInterpolation} in order to prescribe the jets to the approximating curve even though the points in $\Lambda$ may lie outside of the domain of the approximated Legendrian curve:

\begin{lm}
	\label{outsideInterpolation}
	Let $S=K \cup \Gamma \subset \r$, $f\colon S \to \C^{2n+1}$, $R \subset \r$, be as in Lemma \ref{finiteInterpolation}. Suppose we are given
	\begin{enumerate}[label=(\roman*)]
		\item a finite set $\Lambda \subset \Int R$, not intersecting $bK \cup \Gamma$, $\Lambda' = \Lambda \cap \Int S$, $\Lambda'' = \Lambda \cap \Int R \backslash S$ (note that $\Lambda = \Lambda' \cup \Lambda''$);		
		\item a holomorphic Legendrian curve $\phi\colon  O \to \C^{2n+1}$, defined on an open neighbourhood $O \subset \Int R$ of the set $\Lambda''$ and
		\item a function $m\colon  \Lambda \to \N \cup \{0\}$.
	\end{enumerate}
	Then there exists a generalised Legendrian curve $\tilde{f}\colon  S' \to \C^{2n+1}$ of class $\a^r(S')$ defined on some Runge admissible set $S' \subset \Int R$, which contains $S$ in its interior and is a deformation retract of $R$, such that $\tilde{f}$ agrees with $f$ to order $m(p)$ at $p$ for all $p \in \Lambda'$ and $\tilde{f}$ agrees with $\phi$ to order $m(p)$ for all $p \in \Lambda''$. If for some $1\leq j \leq 2n+1$ the projection $f_j$ of $f$ to the $j$-th coordinate in $\C^{2n+1}$ is already holomorphic on a neighbourhood of $R$ and agrees with $\phi$ to order at least $m(p)$ at every point $p \in \Lambda''$, it may be kept fixed so that $\tilde{f}_j = f_j$.
\end{lm}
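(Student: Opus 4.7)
The plan is to enlarge $S$ to a Runge admissible set $S'$ by adjoining, for each $p \in \Lambda''$, a small closed disk $D_p \subset O$ around $p$ (where $\phi$ is already defined) to a slight thickening of $S$ via a single smooth arc $\gamma_p$; to populate $\tilde{f}$ on the thickening of $S$ by extending $f$ via Lemma \ref{finiteInterpolation} and on each $D_p$ by $\phi$ itself; and then to fuse the two Legendrian pieces by an explicit smooth Legendrian arc in $\C^{2n+1}$ along each $\gamma_p$.

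First I would choose a compact neighbourhood $\overline{W} \subset \Int R$ of $S$, a disjoint union of finitely many compact domains with piecewise $\c^r$-smooth boundary, containing $S$ in its interior and deformation retracting onto $S$. For each $p \in \Lambda''$ pick a small closed disk $D_p \subset O$ around $p$ and a smooth embedded arc $\gamma_p \subset \Int R$ joining $b\overline{W}$ transversely to $bD_p$, all pairwise disjoint and disjoint from $\overline{W}$ except at the endpoints, and set $S' := \overline{W} \cup \bigcup_p (\gamma_p \cup D_p)$. Each $\gamma_p \cup D_p$ is contractible and attached to $\overline{W}$ through a single boundary point, so $S'$ is admissible, contains $S$ in its interior, is a deformation retract of $R$, and is Runge in $\r$. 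Then apply Lemma \ref{finiteInterpolation} to $(f,S)$ with interpolation data $(\Lambda', m|_{\Lambda'})$ to obtain a holomorphic Legendrian curve $\tilde{f}_1$ on a neighbourhood of $\overline{W}$ which is $\c^r$-close to $f$ on $S$ and agrees with $f$ to order $m(p)$ at every $p \in \Lambda'$; define $\tilde{f}$ as $\tilde{f}_1$ on $\overline{W}$ and as $\phi$ on each $D_p$.

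The heart of the argument is to extend $\tilde{f}$ smoothly along each $\gamma_p$ keeping it Legendrian and of class $\c^r$. I would parametrise $\gamma_p\colon [0,1] \to \r$ with $\gamma_p(0) \in b\overline{W}$ and $\gamma_p(1) \in bD_p$, prescribe $\tilde{f}\circ\gamma_p$ to equal $\tilde{f}_1\circ\gamma_p$ on $[0,\epsilon]$ and $\phi\circ\gamma_p$ on $[1-\epsilon,1]$ for a small $\epsilon>0$, and construct a smooth Legendrian arc on the middle interval that matches the two prescribed pieces to order $r$. Writing its components as $(x_1,y_1,\ldots,x_n,y_n,z)$, one first chooses any smooth $\c^r$-matching interpolation for the $(x,y)$-part in $\C^{2n}$ via bump-function interpolation, and defines $z$ by integrating the Legendrian equation $\dot z = -\sum_i x_i \dot y_i$. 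What remains is a single complex \emph{period equation} imposing the correct value of $z$ at $t=1-\epsilon$; I would solve it by perturbing the $y_1$-component by a complex multiple of a smooth bump supported in the open middle interval, since the period depends affinely on this parameter and, by the complete non-integrability of $\xi$, the dependence can be arranged to be non-degenerate by a suitable choice of the bump and perturbed coordinate. Gluing the three pieces on each $\gamma_p$ yields $\tilde{f} \in \a^r(S')$ with the required jet conditions at $\Lambda$.

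For the fixed-component clause, the corresponding clause of Lemma \ref{finiteInterpolation} produces $\tilde{f}_{1,j}=f_j$; on each $D_p$ I would replace $\phi$ by a Legendrian curve $\phi'$ with $\phi'_j = f_j$, obtained by retaining the components of $\phi$ other than $j$ and $z$, setting the $j$-th to $f_j$, and recomputing $z$ from the Legendrian equation (the $m(p)$-jet at $p$ is preserved since $f_j - \phi_j$ vanishes to order $m(p)$ there); along each arc I impose $\tilde{f}_j = f_j$ and solve the period equation through a coordinate other than the $j$-th. The main technical obstacle throughout is this Legendrian period equation on the connecting arcs; in every case it is resolved by the complete non-integrability of the contact distribution, the only added subtlety in the fixed-coordinate setting being the loss of one coordinate available to tune the period.
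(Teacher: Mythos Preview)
Your overall architecture is the same as the paper's: thicken $S$ to a smoothly bounded neighbourhood, attach a disk around each $p\in\Lambda''$ by a single arc, populate the thickening via Lemma~\ref{finiteInterpolation}, the disks via $\phi$, and bridge along the arcs by a smooth Legendrian path whose $z$-component is recovered by integration once a single scalar condition (your ``period equation'') is enforced. The paper does this last step more directly---it simply chooses the $(x_i,y_i)$ along $E_p$ so that $z'(p)+\sum_i\int_{E_p}x_i\,dy_i=z(u_p)$ and then integrates---but your bump-perturbation argument is an equivalent way of hitting the same one-dimensional target.

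There is, however, a genuine gap in your treatment of the fixed-component clause: your recipe ``retain the components of $\phi$ other than $j$ and $z$, set the $j$-th to $f_j$, and recompute $z$'' tacitly assumes $j\neq 2n+1$. When the \emph{$z$-coordinate} is the one held fixed, you cannot recompute it, and the Legendrian condition along the arcs becomes a pointwise constraint $\sum_i x_i\,\dot y_i=-\dot z$ with $\dot z$ prescribed, not merely a single period to be tuned. On the disks $D_p$ the same issue arises: replacing $\phi_z$ by $f_z$ forces you to modify one of the other components, but a generic change of some $x_i$ or $y_i$ destroys the Legendrian relation. The paper handles this case by a separate construction: it keeps $z$ and the components $x_i,y_i$ for $i\ge 2$, sets $\beta=-dz-\sum_{i\ge2}x_i\,dy_i$, chooses $x_1$ on $D_p$ with zeros matching those of $\beta$ (so that $\beta/x_1$ is holomorphic), arranges $x_1\neq 0$ along each arc, and then \emph{defines} $y_1$ by integrating $\beta/x_1$, adjusting the data on the arcs so that $\int_{E_p}\beta/x_1$ lands on the correct value $y_1'(p)-y_1(u_p)$. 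You should supply an argument of this kind for $j=2n+1$; your closing remark that the only subtlety is ``the loss of one coordinate available to tune the period'' understates what is required.
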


\begin{proof}
	As in the previous proof we assume $R$ and hence $S$ is connected. By Lemma \ref{finiteInterpolation}, there exists a holomorphic Legendrian immersion $\tilde{f}\colon  \r' \to \C^{2n+1}$, defined on some open neighbourhood $\r'$ of $S$, approximating $f$ in $\c^r(S)$-topology and interpolating $f$ at the points $p \in \Lambda'$ to order at least $m(p)$. Choose a smoothly bounded relatively compact neighbourhood $U$ of the set $S$ in $\r$ such that $U$ contains no point in $\Lambda''$ and such that $U$ deformation retracts onto $S$. Then $\overline{U}$ is a compact Runge admissible subset in $R$. Now, for all $p \in \Lambda''$ choose embedded holomorphic disks $\Omega_p$, $\overline{\Omega}_p \subset O_p$, containing $p$ such that
	\begin{enumerate}[label=\alph*)]
		\item $\overline{\Omega}_p \subset \Int R \backslash \overline{U}$,
		\item $\overline{\Omega}_p \cap \overline{\Omega}_q = \emptyset$ for all $p \neq q$.
	\end{enumerate}
	For each $p \in \Lambda''$ choose a smooth embedded Jordan arc $E_p$ connecting some point in $b\overline{U}$ to $p$ such that
	\begin{enumerate}[label = \alph*)]
		\setcounter{enumi}{2}
		\item $E_p \cap E_q = \emptyset$, i.\ e.\ the arcs $E_p$ are pairwise disjoint,
		\item $E_p$ and $b\overline{U}$ intersect transversely in a single point, which we label $u_p$; 
		\item $E_p$ and $b\Omega_p$ intersect transversely in a single point, which we label $\omega_p$;

		\item $(\overline{\Omega}_p \cup E_p) \cap (\overline{\Omega}_q \cup E_q) = \emptyset$ for any distinct points $p,q \in \Lambda$.
	\end{enumerate}

	Let $\Omega =\cup_{p\in \Lambda''}\Omega_p$ and $E = \cup_{p \in \Lambda''}E_p$. It is immediate that $S'= \overline{U} \cup \Omega \cup E$ deformation retracts onto $S$ and is thus Runge in $\Int R$. By swapping $E_p$ for the subarc $E_p'$ of $E_p$, connecting the points $u_p$ and $\omega_p$, we obtain the admissible set $S' = \overline{U} \cup \bigcup_{p\in\Lambda''}(E_p' \cup \overline{\Omega}_p)$. Furthermore, all points $p \in \Lambda = \Lambda' \cup \Lambda''$ lie in $\Int S'$. 
	
	Denote $\phi$ in coordinates by $\phi = (\bx',\by', z') \in \C^{2n+1}$. Extend the component functions $x_i,y_i$ (we use the same notation, i.\ e.\ $x_i$ and $y_i$, to denote these extensions) of $\tilde{f}$ to $\c^r$-smooth functions on $S'$, holomorphic on $\Int S' = \Int K \cup \Omega$, requiring
	\begin{enumerate}[label=\Alph*)]
		\item 
		\label{xeq}
		$\bx|_{\overline{\Omega}_p} = \bx'$ for all $p \in \Lambda''$,
		\item
		\label{yeq}
		$\by|_{\overline{\Omega}_p} = \by'$ for all $p \in \Lambda''$ and
		\item
		\label{zeq}
		$z'(p)+\sum_{i=1}^n\int_{E_p}x_i\dd y_i = z(u_p)$ for all $p \in \Lambda''$.
	\end{enumerate}
	Such maps are obtained by first fixing them on the pairwise disjoint sets $\overline{\Omega}_p$ and $\overline{U}$ and then smoothly extending them over the arcs $E_p$ with smooth paths which agree with $\bx'$ and $\by'$ to order $r$ at $u_p$ and $\omega_p$. Choose a point $u_0 \in \Int K$ and extend the $z$-component to $S'$ by setting
	\begin{equation}
		\label{zint}
		z(u) = z(u_0) - \sum_{i=1}^n\int_{u_0}^u x_i\dd y_i,
	\end{equation}
	so that the extension of $\tilde{f}$ to $S'$ remains Legendrian. Since $S$ is a deformation retract of $S'$, the above integral is well-defined. By construction, $S'$ is a Runge admissible set in $\r$, containing $S$ in its interior, and $\tilde{f}$ is the required generalised Legendrian curve on $S'$.
	
	If some $x_k$ or $y_k$ is already holomorphic on a neighbourhood of $R$, replace \ref{xeq} or \ref{yeq} by $x_i|_{\overline{\Omega}_p} = x_i'$ for all $i \neq k$ or $y_i|_{\overline{\Omega}_p} = y_i'$ for all $i \neq k$ respectively, construct $x_i'$ and $y_i'$ over the arcs $E_p$ such that \ref{zeq} holds and obtain the $z$-component by integrating as in \ref{zint}.
	
	Suppose now $z$ is holomorphic on a neighbourhood of $R$. For $i \geq 2$ and every $p \in \Lambda''$ we extend the component functions $x_i$, $y_i$ to $\Omega_p$ by setting $x_i|_{\Omega_p} = x_i'$ and $y_i|_{\Omega_p} = y_i'$. Denote by $\beta$ the holomorphic 1-form $\beta = -\dd z - \sum_{i=2}^n x_i \dd x_i$ and note that if $\beta(p) = 0$ for some $p \in \Lambda''$ such that $m(p) \geq 1$, then $x_1'(p) = 0$ by the Legendrian condition $\phi^*\alpha=0$. Choose a holomorphic function $g\colon  \Omega \to \C$ such that 
	\begin{enumerate}[label=G$_\arabic*$:]
		\item $g$ agrees with $\phi$ to order at least $m(p)$ for all $p \in \Lambda''$,
		\item if $\beta(p) = 0$ for some $p \in \Lambda''$, then $g(p) = 0$ and the order of this zero of $g$ is no more than the order of the zero $p$ of $\beta$,
		\item after shrinking $\Omega_p$ if necessary, $g$ has no other zeroes in $\Omega_p$ for all $p \in \Lambda''$.
	\end{enumerate}
	Note that with these properties, the 1-form $\tilde{\beta} = \beta / g$ is holomorphic in $\Omega$. Extend the $x_1$ component of $f$ to $\Omega$ by $x_1|_{\Omega} = g$. Now, extend $x_i,y_i$ for $i \geq 2$, $z$ and $x_1$ over the arcs $E_p'$ to functions of class $\a^r(S')$ such that $x_1$ has no zeroes on $E_p'$ (hence it also has no zeroes on $E_p \subset E_p' \cup \Omega_p$) and $\int_{E_p} \beta/x_1 = \phi(p) - y_1(u_p)$ holds for every $p \in \Lambda''$. For a fixed $u_0 \in \Int K$ define $y_1(u) = y_1(u_0) + \int_{u_0}^u \beta/x_1$, note that the integral is path independent, and observe, that the curve $f=(x_1,y_1,\ldots,x_n,y_n,z)$ is Legendrian thus completing the proof.
\end{proof}

\subsection{Approximation by embeddings}
\label{cptbrd}

A \emph{compact bordered Riemann surface} $M$ is the closure of a smoothly bounded relatively compact open subset $M \subset \mathcal{R}$ in an open Riemann surface $\mathcal{R}$ whose boundary $bM$ consists of finitely many (smooth) Jordan curves. In this section we show holomorphic Legendrian curves in $\C^{2n+1}$ defined on compact bordered Riemann surfaces may be approximated uniformly in $\c^r$-topology with Legendrian embeddings, interpolating at a finite number of points to any given finite order. The result will be exploited in the proof of theorems \ref{mergelyan} and \ref{carleman} in order to make the approximating map a limit of embeddings thus guaranteeing its injectivity.

Once more we apply the methods from \cite{Alarcon2017}, adapting the proof of \cite[Lemma 4.4]{Alarcon2017} in order to include interpolation. The idea is to construct a family of holomorphic Legendrian immersions
$$
H(\cdot, \xi) \colon  M \to \C^{2n+1}, \quad \xi \in \C^N
$$
where $M$ is a compact bordered Riemann surface, such that the \emph{difference map}
$$
\delta H \colon  M\times M \times \C^N \to \C^{2n+1}
$$
given by $\delta H (u,v,\xi) = H(v,\xi) - H(u,\xi)$ is a \emph{submersive family of maps} on $M \times M \backslash U'$, where $U'$ is a suitably chosen open set in $M \times M$, containing the diagonal $D_M$ in $M \times M$, meaning the differential $\partial_\xi (\delta H)$ at $\xi = 0$ is surjective along $M \times M \backslash U'$. By Sard's theorem, $\delta H(\cdot,\cdot,\xi)$ is then transverse to the origin $0 \in \C^{2n+1}$, viewed as a submanifold in $\C^{2n+1}$, for a generic choice of parameter $\xi$ near 0. Since the image of $M \times M \backslash U'$ by the map $\delta H(\cdot,\cdot,\xi)$ is (at most) two-dimensional and the origin is zero dimensional, transversality implies the two submanifolds do not meet, thus $H(\cdot,\xi)$ is injective. Since $M$ is compact, it is an embedding.

\begin{thm}\label{borderedInterpolation}
	Suppose $M$ is a compact bordered Riemann surface and $f\colon  M \to \C^{2n+1}$ is a holomorphic Legendrian curve of class $\a^r(M)$ in $\C^{2n+1}$ with the standard contact structure. For a given finite set $\Lambda \subset \Int M$ such that the values $f(p)$ for $p \in \Lambda$ are pairwise distinct (i.\ e.\ $f|_\Lambda$ is injective) and a function $m\colon  \Lambda \to \N\cup \{0\}$ such that $\dd f(p) \neq 0$ for all $p \in \Lambda\cap \{m\geq 1\}$, there exists a holomorphic Legendrian embedding $F\colon  M \hookrightarrow \C^{2n+1}$ arbitrarily close to $f$ in the $\c^r(M)$-topology such that the map $F-f$ has a zero of order at least $m(p)$ at every point $p \in \Lambda$.
\end{thm}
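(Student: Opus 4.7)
The strategy mirrors the sketch given immediately before the statement. First I would apply Lemma \ref{finiteInterpolation} to reduce to the case in which $f$ itself is a holomorphic Legendrian immersion of class $\a^r$ on a neighbourhood of $M$ and already agrees with the prescribed jets at every $p \in \Lambda$; this is possible because by hypothesis $\dd f(p) \neq 0$ at every $p \in \Lambda \cap \{m\geq 1\}$. Since $f$ is an immersion and $M$ is compact, one can choose an open neighbourhood $U'$ of the diagonal $D_M \subset M\times M$ small enough that the difference map $\delta f(u,v) = f(v)-f(u)$ vanishes on $U'$ only along $D_M$. The problem then reduces to approximating $f$, in the $\c^r(M)$-topology and while preserving the jets at $\Lambda$, by a Legendrian immersion $F$ whose difference map is still nowhere zero on the compact set $M\times M \setminus U'$.

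Next I would build a holomorphic period-corrected spray $H\colon M\times \C^N \to \C^{2n+1}$ of Legendrian curves satisfying $H(\cdot, 0)=f$ and such that: (a) every slice $H(\cdot, \xi)$ is a Legendrian immersion whose difference with $f$ has a zero of order at least $m(p)$ at each $p \in \Lambda$; (b) at each fixed point $(u,v) \in M\times M \setminus U'$ the linearisation $\partial_\xi (\delta H)(u,v,0)\colon \C^N \to \C^{2n+1}$ is surjective. The construction follows the $x_1$-spray scheme of Lemma \ref{finiteInterpolation}: one writes
\[
\tilde{x}_1(u,\xi) = x_1(u) + \sum_{k=1}^{N} \xi_k \phi_k(u),
\]
picks holomorphic functions $\phi_k$ on $M$ which vanish to order $m(p)+1$ at every $p\in \Lambda$ (to guarantee (a)) and which together with a free set of period-correcting parameters close up the period map over a homology basis of $M$, recovering the $z$-component by integration as in Step 5 of that lemma. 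To achieve (b), I pick a finite set of pairs $(u_j,v_j)_{j=1}^L$ in $M\times M \setminus U'$ whose neighbourhoods cover this compact set, and arrange the $\phi_k$ so that the induced values at $v_j$ minus the values at $u_j$ span $\C^{2n+1}$ for each $j$; by continuity the surjectivity then persists on a whole neighbourhood of $(u_j,v_j)$, and the required functions with prescribed independent values and jets at the finite collection $\Lambda \cup \{u_j\} \cup \{v_j\}$ are supplied by Mergelyan's theorem with jet-interpolation on the Runge surface $M$.

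With such a spray in hand I would invoke a standard transversality/Sard argument. The zero set of $\delta H(\cdot,\cdot,\xi)$ outside $U'$ has expected codimension $2n+1$ in $M\times M$ for generic $\xi$ by (b), while $\dim_\C (M\times M) = 2 \leq 2n+1$, so for almost every sufficiently small $\xi$ the map $\delta H(\cdot,\cdot,\xi)$ is nowhere zero on $M\times M \setminus U'$. Fixing one such $\xi$ close to $0$, the slice $F := H(\cdot,\xi)$ is a holomorphic Legendrian immersion arbitrarily $\c^r$-close to $f$ on $M$, matches the jets of $f$ to order $m(p)$ at every $p\in\Lambda$, and is injective on $M\times M \setminus U'$; combined with local injectivity on $U'$ (a consequence of $F$ being an immersion and $U'$ chosen sufficiently small) and compactness of $M$, $F$ is a holomorphic Legendrian embedding.

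The main obstacle is the spray construction in the second step: one must balance three competing demands simultaneously, namely closing the periods of $\sum x_i \dd y_i$ so that every slice is Legendrian, enforcing the vanishing of $\phi_k$ to the prescribed orders at $\Lambda$ so that jet-interpolation is preserved along the whole family, and still having enough remaining freedom for the difference evaluation at the chosen pairs $(u_j,v_j)$ to span $\C^{2n+1}$. The first two requirements cut out an affine subspace of the parameter space, and one has to verify that the evaluation-difference map is surjective when restricted to this subspace; this ultimately reduces, exactly as in the proof of Lemma \ref{finiteInterpolation}, to a Mergelyan-with-jet-interpolation problem for scalar holomorphic functions on the open Riemann surface containing $M$, which is solvable since the relevant interpolation nodes $\Lambda \cup \{u_j,v_j\}$ form a finite, hence Runge, subset of $M$.
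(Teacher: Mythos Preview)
Your overall architecture is the right one and matches the paper: reduce to an immersion via Lemma \ref{finiteInterpolation}, choose $U'$ where $\delta f$ already avoids zero, build a period-corrected Legendrian spray that preserves the jets at $\Lambda$, and finish with parametric transversality. The defect is in the spray itself. You propose to perturb only the $x_1$-component,
\[
\tilde{x}_1(u,\xi)=x_1(u)+\sum_k \xi_k\,\phi_k(u),
\]
and to recover $z$ by integrating $\tilde{\bx}\,\dd\by$. With this choice the components $x_2,\dots,x_n$ and $y_1,\dots,y_n$ of $H(\cdot,\xi)$ are \emph{independent of $\xi$}, so for any $(u,v)$ the linearisation $\partial_\xi(\delta H)(u,v,0)$ lands in the two-dimensional subspace of $\C^{2n+1}$ spanned by the $x_1$- and $z$-directions. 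It can therefore never be surjective onto $\C^{2n+1}$ (already for $n=1$ the target is three-dimensional). Consequently the transversality step cannot be carried out: the ``bad'' parameter set may well be all of $\C^N$. The sentence ``arrange the $\phi_k$ so that the induced values at $v_j$ minus the values at $u_j$ span $\C^{2n+1}$'' is precisely where the argument breaks; no choice of scalar functions $\phi_k$ can produce variation in the $y$-directions.

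What the paper does, and what is missing from your plan, is to perturb \emph{both} $x_1$ and $y_1$. Concretely (in the case $n=1$) it sets
\[
\tilde{x}=x+\xi_1 w_1+\xi_3 w_2+\sum_i\zeta_i g_i+\sum_{p}\varsigma_p h_p,\qquad \tilde{y}=y+\xi_2 w_1,
\]
with $w_1(u)=0$, $w_1(v)=1$, $w_2(u)=w_2(v)=0$ and $\int_E w_2\,\dd y\approx -1$, all functions vanishing to the required orders at $\Lambda$. One then solves the period and value constraints for $(\zeta,\varsigma)$ as holomorphic functions of $\xi$ by the implicit function theorem, and checks that $\partial_\xi(\delta H)\big|_{\xi=0}$ is lower triangular with $1$'s on the diagonal up to $O(\mu)$: $\xi_1$ moves $\delta\tilde{x}$, $\xi_2$ moves $\delta\tilde{y}$, and $\xi_3$ moves $\delta\tilde{z}$ through the path integral $\int_E w_2\,\dd y$. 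Only with this simultaneous $x$- and $y$-perturbation (and the implicit dependence of the correction parameters on $\xi$ kept under control) does one obtain surjectivity onto all $2n+1$ directions. A secondary point you omit is that $U'$ must also contain product neighbourhoods $\Omega_p\times\Omega_q$ for distinct $p,q\in\Lambda$, since the spray is rigid there by the jet condition and one relies on $f(p)\neq f(q)$ rather than transversality; but this is easily repaired once the main construction is fixed.
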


\begin{proof}
	We prove the theorem in the case $n=1$. It will be clear from the proof that the result may be extended to arbitrary $n \in \N$. Since $M$ is a compact bordered Riemann surface, it may be viewed as a compact Runge admissible set in some open Riemann surface $\r$. By Lemma \ref{finiteInterpolation}, we may assume $f$ is a holomorphic Legendrian immersion $f=(x,y,z) \colon  \r \to \C^3$. By assumption, there exist open disks $\Omega_p$ for $p \in \Lambda$ such that $f|_{\Omega_p}$ is an embedding for all $p$. Moreover, by shrinking the sets $\Omega_p$ if necessary, we may assume the difference map $\delta f\colon  M \times M \to \C^3$, given by $\delta f(u,v) = f(v) - f(u)$ is non-vanishing on the open set
	$$
	\Omega = \bigcup_{(p,q) \in \Lambda \times \Lambda \backslash D_M} \Omega_p \times \Omega_q,
	$$
	where $D_M$ is the diagonal in $M \times M$. Since $f$ is an immersion, there is an open neighbourhood $U$ of $D_M$ such that $\delta f$ is non-vanishing also on $U \backslash D_M$. We now put $U' = U \cup \Omega$. 
	
	Fix $(u,v) \in M \times M \backslash U'$. By \cite[Lemma 1.12.10]{Alarcon2021}, there exists a connected Runge subset $C'' \subset M$, consisting of finitely many smooth Jordan curves $C_1,\ldots, C_l$ such that $C_i \cap C_j = \{u_0\}$ for some fixed point $u_0 \in \Int M\backslash \Lambda$ and all $i\neq j$. By slightly deforming these curves if necessary, we may assume $C''$ does not intersect the discrete set $\Lambda \cup \{\dd y = 0\}$. Connect the points $u,v \in M$ with a smooth Jordan arc $E$, disjoint from $\Lambda$, such that $E$ intersects $C''$ only at $u_0$ or not at all and the union $C' \cup E$ is Runge in $M$. 
	
	 Choose a small closed disk $D$ centered at $u_0$ which does not intersect $\Lambda$ such that $E \backslash D \neq \emptyset$ and $C_i \backslash D \neq \emptyset$ for all $i=1,\ldots,l$ and denote by $C'$ the union $C'= C'' \cup E \cup D$. Now for each $p \in \Lambda$ choose a small embedded disk $O_p \subset \r$, $p \in \Int O_p$, not intersecting $C'$ and connect $u_0$ to $p$ with a smooth Jordan arc $E_p$ such that the intersection $E_p \cap O_p$ is transverse and consists of a single point labeled $o_p$ and the intersection $E_p \cap bD$ is also transverse and consists of a single point $d_p \in bD$. Now label the subarc of $E_p$ connecting $d_p$ to $o_p$ by $E_p'$. By slightly deforming the arcs $E_p'$ if necessary we may ensure the sets $O_p \cup E_p'$ and $O_q \cup E_q'$ do not intersect when $p \neq q$. Denote by $C$ the union $C = C' \cup \bigcup_{p\in \Lambda}(O_p \cup E_p')$. Note that $C$ deformation retracts onto $C'$ and is thus a Runge admissible subset in the open Riemann surface $\Int M$.
	 
	 Using the same method as in the proof of Lemma \ref{finiteInterpolation} we construct holomorphic functions $g_i\colon \r \to \C$ for $i=1,\ldots,l$ such that
	\begin{enumerate}[label=G$_{\arabic*}$:, ref=G$_{\arabic*}$]
		\item \label{gzero} $g_i \approx 0$ on $D\cup E$ for all $i=1,\ldots,l$,
		\item \label{gdelta} $\int_{C_j}g_i \dd y \approx \delta_{ij}$ for all $i,j= 1,\ldots, l$,
		\item \label{gjet} $g_i^{(k)}(p) = 0$ for all $k = 0,\ldots,m(p)$ and all $p \in \Lambda$,
		\item \label{gimm} $\dd g_i(p) = 0$ for all $p \in \r$ such that $\dd y(p) = 0$,
		\item \label{gint} $\int_{E_p} g_i \dd y \approx 0$ for all $i=1,\ldots,l$ and all $p \in \Lambda$,
%		\item \label{gpath} $|g_{i}(u)| < 1$ for all $u \in E$ and $i=1,\ldots,l$.
	\end{enumerate}
	We then construct holomorphic functions $h_p\colon \r \to \C$ for $p \in \Lambda$ such that
	\begin{enumerate}[label=H$_{\arabic*}$:, ref=H$_{\arabic*}$]
		\item \label{hzero} $h_p \approx 0$ on $C'$,
%		\item \label{hexct} $\int_{C_i}h_p \dd y \approx 0$ for all $i=1,\ldots,l$ and all $p \in \Lambda$,
		\item \label{hjet} $h_p^{(k)}(q) = 0$ for all $k=0,\ldots,m(p)$ and all $p,q \in \Lambda$,
		\item \label{himm} $\dd h_p(q) = 0$ for all $q \in \r$ such that $\dd y(q) = 0$
		\item \label{hdelta} $\int_{E_q}h_p\dd y \approx \delta_{pq}$ for all $p,q \in \Lambda$,
		\item \label{hpath} $|h_p(q)|< 1$ for all $q \in E$ and $p \in \Lambda$.
	\end{enumerate}
	Next, construct holomorphic functions $w_1,w_2 \colon  \r \to \C$ such that
	\begin{enumerate}[label=W$_{\arabic*}$:, ref=W$_{\arabic*}$]
		\item \label{wzero} $w_j \approx 0$ on $C'' \cup D$ for $j=1,2$,
		\item \label{wjet} $w_j^{(k)}(p)=0$ for all $k=0,\ldots,m(p)$ and all $p \in \Lambda$, $j=1,2$.
		\item \label{wpath} $w_1(u) = 0$, $w_1(v) =1$, $w_2(u) = w_2(v)=0$,
		\item \label{w1small} $|w_1(p)| \leq 1$ for all $p \in E$,
		\item \label{wint} $\big| 1+\int_E w_2\dd y \big| < \mu$,
		\item \label{wsmall} $|w_j(p)| < \mu$ and $|\dd w_j(p)| < \mu$ for all $p \in C$, $j=1,2$,

	\end{enumerate}
	
	Again we make use of the extended period map $(\p, \mathcal{Z})\colon \a^r(M)^2 \to \C^l \times \C^{|\Lambda|}$, where \linebreak $\p = (\p_k)_{k=1}^l$,  $\mathcal{Z} = (\mathcal{Z}_p)_{p \in \Lambda}$, and $\p_k, \mathcal{Z}_p \colon  \a^r(M)^2 \to \C$ are given by
	\[
	\p_k(\phi,\rho) = \int_{C_k}  \phi \, \dd \rho, \quad
	\mathcal{Z}_p(\phi,\rho) = z(p) - z(u_0) + \int_{E_p} \phi \, \dd \rho.
	\]
	Let $\xi = (\xi_1,\xi_2,\xi_3) \in \C^3$, $\zeta = (\zeta_i)_{i=1}^l \in \C^l$, $\varsigma = (\varsigma_p)_{p\in \Lambda} \in \C^{|\Lambda|}$, define
	\[
	\tilde{x}(\cdot, \xi_1,\xi_3,\zeta, \varsigma) = x + \xi_1w_1 + \xi_3 w_2 + \sum_{i=1}^{l}\zeta_i g_i + \sum_{p\in \Lambda}\varsigma_p h_p, \quad
	\tilde{y}(\cdot, \xi_2) = y + \xi_2 w_1,
	\]
	and denote by $\f\colon  \C^3 \times \C^{l}\times \C^{|\Lambda|} \to \a^r(M)^2$ the map
	\[
	\f\colon  \C^3 \times \C^{l}\times \C^{|\Lambda|} \ni (\xi,\zeta, \varsigma) \mapsto
		(\tilde{x}(\cdot, \xi_1,\xi_3,\zeta, \varsigma), \tilde{y}(\cdot, \xi_2)) \in \a^r(M)^2.
	\]
	The composition $ \tilde{\mathcal{F}}=(\p,\z) \circ \mathcal{F}$ is then a holomorphic map
	$\tilde{\mathcal{F}} \colon  \C^3_\xi \times \C^l_\zeta \times \C^L_\varsigma \to \C^l \times \C^L$ with components $\tilde{\p}_k := \p_k \circ \f$, $k=1,\ldots,l$ and $\tilde{\z}_p = \mathcal{Z}_p \circ \f$ for $p\in \Lambda$.	By property \ref{gdelta}, the matrix of partial derivatives
	$$
	\partial_\zeta \tilde{\p}\big|_{(\xi,\zeta,\varsigma)=0} = \left(\frac{\partial \tilde{\p}_k}{\partial \zeta_j}\right)_{j,k=1}^l \bigg|_{(\xi,\zeta,\varsigma)=0}
	$$
	is approximately the identity matrix. Same is true for
	$$
	\partial_\varsigma \tilde{\mathcal{Z}} \big|_{(\xi,\zeta,\varsigma)=0}=
	\left(
	\frac{\partial \tilde{\mathcal{Z}}_p}
	{\partial \varsigma_q}
	\right)_{p,q\in \Lambda}\bigg|_{(\xi,\zeta,\varsigma)=0}
	$$
	by property \ref{hdelta}. By \ref{gint}, the matrix $\partial_\zeta \tilde{\mathcal{Z}}|_{(\xi,\zeta,\varsigma)=0}$ is approximately the zero matrix and by \ref{hzero} the matrix $\partial_\varsigma \tilde{\p}|_{(\xi,\zeta,\varsigma)=0}$ is approximately the zero matrix as well thus the matrix of partial derivatives
	$$
	\partial_{\zeta,\varsigma} \tilde{\f}|_{(\xi,\zeta,\varsigma)=0} =
	\begin{bmatrix}
		\partial_\zeta \p & \partial_\varsigma \p \\
		\partial_\zeta \mathcal{Z} & \partial_\varsigma \mathcal{Z}
	\end{bmatrix}\bigg|_{(\xi,\zeta,\varsigma)=0}
	\approx
	\begin{bmatrix}
		\id_l & 0 \\
		0 & \id_L
	\end{bmatrix}
	$$
	is invertible and by the implicit function theorem we can solve the equation $\tilde{\f}(\xi,\zeta,\varsigma) = 0$ on $\xi$, that is express the variables $\zeta=\zeta(\xi)$ and $\varsigma=\varsigma(\xi)$ as holomorphic functions of $\xi$, so that $\tilde{\f}(\xi,\zeta(\xi),\varsigma(\xi)) = 0$ for all $\xi \in \C^3$ sufficiently close to the origin. For a fixed such $\xi$, $\tilde{\p}$-components being zero imply the holomorphic 1-form $\tilde{x}(\xi) \dd \tilde{y}(\xi)$ is exact, hence it integrates to the $\tilde{z}$-component 
	\[
	\tilde{z}(t,\xi) = z(u_0) - \int_{u_0}^t \tilde{x}(s,\xi) \dd \tilde{y}(s,\xi)
	\]
	of a holomorphic Legendrian curve $H(\cdot,\xi) = H^{(u,v)}(\cdot,\xi)\colon  M \to \C^3$, given by 
	\[
	H(t,\xi) = (\tilde{x}(t,\xi,\zeta(\xi),\varsigma(\xi)), \tilde{y}(t,\xi),\tilde{z}(t,\xi)).
	\]
	Since also the $\tilde{\z}$-components of the map $\tilde{\f}$ are zero, it follows that $\tilde{z}(p,\xi) = z(p)$ for all $p \in \Lambda$ and all sufficiently small $\xi \in \C^3$. Hence, by construction, the curve $t \mapsto H(t,\xi)$ satisfies the interpolation condition from the theorem. Moreover, since $H(\cdot,0) = (x,y)$ is an immersion, so is $H(\cdot,\xi)$ for sufficiently small $\xi$ by the Cauchy estimates since $M$ is compact. 
	
	We shall now prove that the differential $\partial_\xi \delta H(u,v, \xi)|_{\xi = 0}$ is surjective. From \[\tilde{\f}(\xi,\zeta(\xi),\varsigma(\xi)) = 0\] we obtain by the chain rule:
	\begin{align*}
		\frac{\partial}{\partial \xi}
		\bigg|_{\xi=0} \tilde{\p} 
		+ \left(\frac{\partial}{\partial \zeta}
		\bigg|_{\zeta=0}\tilde{\p}, \; \frac{\partial}{\partial \varsigma}
		\bigg|_{\varsigma = 0} \tilde{\p}\right)^\top
		\left(
		\pdv{\zeta}{\xi}\bigg|_{\xi=0} ,
		\pdv{\varsigma}{\xi}\bigg|_{\xi=0} 
		\right)
		&= 0 \\
		\frac{\partial}{\partial \xi}
		\bigg|_{\xi=0} 
		\tilde{\mathcal{Z}} +
		\left(
		\frac{\partial}{\partial \zeta}
		\bigg|_{\zeta=0}\tilde{\mathcal{Z}}, \; \frac{\partial}{\partial \varsigma}
		\bigg|_{\varsigma = 0} \tilde{\mathcal{Z}}\right)^\top
		\left(
		\pdv{\zeta}{\xi}\bigg|_{\xi=0} ,
		\pdv{\varsigma}{\xi}\bigg|_{\xi=0} 
		\right)
		&= 0
	\end{align*}
	By \ref{wsmall}, $\partial_\xi\tilde{\p}|_{\xi=0}$ is of size $O(\mu)$ and by \ref{wjet}, also $\partial_\xi \tilde{\z}|_{\xi=0}$ is of size $O(\mu)$. Since the above matrix of $\partial_\zeta$ and $\partial_\varsigma$ partial derivatives of the extended period map $(\tilde{\p},\tilde{\z})$ is approximately the identity matrix, we conclude that also the matrix of derivatives $(\partial_\xi \zeta, \partial_\xi \varsigma)|_{\xi = 0}$ is of size $O(\mu)$. We now compute the differential of the difference map $\delta H(\cdot, \cdot,\xi)$ at $(u,v) \in M \times M \backslash U'$ and the parameter value $\xi = 0$. Its first component is
	\begin{align*}
		\frac{\partial}{\partial \xi}
		\bigg|_{\xi=0}
		\delta\tilde{x}
		(u,v,\xi,\zeta(\xi),\varsigma(\xi)) 
		&=  
		(
		\partial_\xi\tilde{x}(v,0) + 
		\partial_\zeta 
		\tilde{x}(v,0)\partial_\xi\zeta(0) + \partial_\varsigma \tilde{x}(v,0)\partial_\xi\varsigma(0)) - \\
		&-(
		\partial_\xi\tilde{x}(u,0) + 
		\partial_\zeta \tilde{x}(u,0)\partial_\xi\zeta(0) + 
		\partial_\varsigma \tilde{x}(u,0)\partial_\xi\varsigma(0))
	\end{align*}
	Now, $\partial_\xi\tilde{x}(v,0) = (1,0,0)$ and $\partial_\xi\tilde{x}(u,0) = 0$ by \ref{wpath}. Moreover, the derivatives $\partial_\zeta \tilde{x}$ and $\partial_\varsigma \tilde{x}$ at $(\xi,\zeta,\varsigma) = 0$ are indepentent of $\mu$, thus all the other terms in the above sum add up to a term of the size $O(\mu)$, since $\partial_\xi\zeta(0)$ and $\partial_\xi\varsigma(0)$ are of the size $O(\mu)$. It follows that
	\[
	\frac{\partial}{\partial \xi}\bigg|_{\xi=0}\delta\tilde{x}(u,v,\xi,\zeta(\xi),\varsigma(\xi)) = (1,0,0) + O(\mu).
	\]
	Similarly we obtain
	\[
	\frac{\partial}{\partial \xi}\bigg|_{\xi=0} \tilde{y}(u,v,\xi) = \partial_\xi \tilde{y}(v,0) - \partial_\xi \tilde{y}(u,0) = (0,w_1(v) - w_1(u),0) = (0,1,0).
	\]
	To estimate the $\xi$-derivative of the $\tilde{z}$ component, first observe that by \ref{gzero}, \ref{hzero} and \ref{wsmall} we have
	\begin{equation}
		\label{pdvx}
	\pdv{\xi_j} \int \tilde{x} d\tilde{y} = \int (\partial_{\xi_j} \tilde{x}) \dd \tilde{y} + \int \tilde{x} \dd (\partial_{\xi_j} \tilde{y}).
	\end{equation}
	The last term in the above sum is nonzero only when $j=2$ and in that case we have $\partial_{\xi_2} \tilde{y} =  w_1$. We compute
	\begin{multline*}
	-
	\left(
	\int_{u_0}^u \tilde{x} \dd (\partial_{\xi_2}\tilde{y}) - \int_{u_0}^v \tilde{x} \dd (\partial_{\xi_2}\tilde{y})
	\right) =
	\int_E \tilde{x} \dd (\partial_{\xi_2}\tilde{y}) = \\
	=
	\int_E x\, \dd w_1 + \xi_1 \int_E w_1 \dd w_1 + \xi_3 \int_E w_2 \dd w_1
	 + \sum_{i=1}^l\zeta_i \int_E g_i \dd w_1 + \sum_{p\in \Lambda} \varsigma_p \int_E h_p \dd w_1,
	\end{multline*}
	and observe that by \ref{wpath} the term $\int_E x \, \dd w_1$ is bounded in absolute value by some constant, independent of $\mu$, whereas all the other terms vanish at $\xi=0$. In the first term of equation \ref{pdvx}, we obtain:
	\[
	\partial_{\xi_1} \tilde{x} = w_1 + \sum_{i=1}^l (\partial_{\xi_1}\zeta_i) g_i + \sum_{p\in \Lambda} (\partial_{\xi_1}\varsigma_p)h_p,
	\]
	thus $\int_{u_0}^t (\partial_{\xi_1} \tilde{x})\dd \tilde{y}$ is bounded from above in absolute value by $C + O(\mu)$ for $t \in \{u,v\}$ where $C$ is some constant, independent of $\mu$, by \ref{w1small}, \ref{gzero}, \ref{hzero} and the fact that $\partial_\xi \zeta_i$ and $\partial_\xi \varsigma_p$ are of the size $O(\mu)$. On the other hand, $\partial_{\xi_2}\tilde{x} = 0$ by construction. Finally, we compute
	\begin{align*}
		\frac{\partial}{\partial \xi_3}
		\bigg|_{\xi=0}
		\delta\tilde{z}(u,v,\xi) 
		&= 
		\frac{\partial}{\partial \xi_3}
		\bigg|_{\xi=0}
		\left(
		\int_{u_0}^u \tilde{x}\dd\tilde{y} - \int_{u_0}^v\tilde{x}\dd\tilde{y}
		\right) = 
		-\frac{\partial}{\partial \xi_3}
		\bigg|_{\xi=0} 
		\int_E \tilde{x}\dd\tilde{y} = 										\\
		&= -\left(
		\int_E (\partial_{\xi_3}\tilde{x})\dd\tilde{y} 
		+ \tilde{x}(\partial_{\xi_3} \dd\tilde{y})
		\right)
		\bigg|_{\xi=0} = 													\\
		&= 
		-\int_E w_2\dd y - 
		\int_E  
		\left(
		\langle 
		(\partial_{\xi_3}\zeta)(0),
		(g_i)_{i=1}^l
		\rangle + 
		\langle(
		\partial_{\xi_3} \varsigma)(0),
		(h_i)_{i=1}^L \rangle\right)\dd y=									\\
		&= 1 + O(\mu),
	\end{align*}
	where in the last equality we have used property \ref{wint}. Thus, the partial differential $\partial_\xi \delta H$ at $\xi = 0$ is of the form
	\[
	\partial_\xi (\delta H) \big|_{\xi=0} = 
	\begin{bmatrix}
		1 &0 &0 \\
		0 &1 &0 \\
		\ast & \ast & 1
	\end{bmatrix}	
	+ \, O(\mu),
	\]
	where the components of the above matrix denoted by $\ast$ are bounded from above in absolute value by constants, independent of $\mu$. It follows that for a small enough $\mu > 0$ the matrix is invertible thus $(\partial_\xi \delta H)|_{\xi=0}\colon  \C^3 \to \C^3$ is an isomorphism at the point $(u,v) \in M \times M \backslash U'$, i.\ e.\ $\delta H$ is a submersion at $(u,v)$. Since this is an open property, $\delta H = \delta H^{(u,v)}$ is a submersion on some small neighbourhood $W_{u,v}$ of $(u,v) \in M \times M \backslash U'$.
	
	We may repeat this construction near any point $(u,v) \in M\times M \backslash U'$ and obtain a family of maps $\{H^{(u,v)}\}_{(u,v) \in M \times M \backslash U'}$ and an open cover $\{W_{u,v}\}_{(u,v) \in M \times M \backslash U'}$ for $M \times M \backslash U'$ such that the difference map $\delta H^{(u,v)}$ is a submersion on $W_{u,v}$ for all $u,v$. Since $M \times M \backslash U'$ is compact, finitely many $W_{u,v}$ cover $M \times M \backslash U'$. We relabel these sets by $W_1, \ldots, W_N$ and the associated sprays by $H^1,\ldots,H^N$. We now compose these sprays in the following way: for each $j=1,\ldots,N$ put $\tilde{H}^j(t,\xi^j) = H^j(t,\xi^j) - f(t)$ where $\xi^j = (\xi^j_1, \xi^j_2, \xi^j_3) \in \C^3$ and define for $\xi = (\xi^1,\ldots,\xi^N) \in \C^{3N}$ and $t \in M$:
	\[
	H(t,\xi) = f(t) + \sum_{j=1}^N\tilde{H}^j(t,\xi^j).
	\]
	
	Any point $(u,v) \in M \times M \backslash U'$ is contained in some $W_j$, hence the partial derivative \linebreak $\partial_{\xi^j}(\delta H)(u,v,\xi)|_{\xi^j=0} = \partial_{\xi^j}(\delta\tilde{H})^j(u,v,\xi^j)|_{\xi^j=0}$ is an isomorphism at $(u,v)$ by construction. Let $H_0 = H(\cdot,\cdot,0)$. Since $(u,v)$ was arbitrary, $\delta H_0$ is submersive on $M \times M \backslash U'$, i.\ e.\ $\delta H_0$ is transverse to the submanifold $\{0\} \subset \C^3$ along $M \times M \backslash U'$.	By the parametric transversality theorem (see e.~g.\ \cite[Chapter 3, Theorem 2.7.]{Hirsch1976}), the difference map $\delta H(\cdot,\cdot,\xi)$ is transverse to the submanifold $\{0\} \in \C^3$ for a generic choice of $\xi = (\xi^1,\ldots,\xi^N) \in \C^{3N}$, hence, by dimension reasons, the difference map omits the value 0, meaning that $H(\cdot,\xi)$ is injective on $M \times M \backslash U'$.
	
	Since $f$ is an immersion, so is $H(\cdot,\xi)$ for sufficiently small $\xi$, thus $H(\cdot,\xi)$ stays injective on $U\backslash D_M$ where $U$ is a neighbourhood of the diagonal $D_M \subset M \times M$. On the other hand, since $H(\cdot,\xi)$ interpolates $f$ at $p \in \Lambda$, it stays an injective immersion on $\Omega$. Thus, $H(\cdot,\xi)$ is an injective holomorphic Legendrian immersion $H(\cdot,\xi)\colon M \to \C^3$ for a generic $\xi$ near the origin in $\C^{3N}$, i.\ e.\ an embedding, since $M$ is compact.
\end{proof}

\begin{rmk}
	Note that approximation with interpolation by embeddings is still possible if $M$ is some admissible set in an open Riemann surface $\r$ since the closure $\overline{W}$ of some regular neighbourhood $W$ of such an $M$ is then a compact bordered Riemann surface.
\end{rmk}

\subsection{Controlling the boundary behaviour}

After a minor deformation of the starting data, the next lemma essentially follows from the proof of \cite[Lemma 5.2]{Alarcon2017}. Let $\|\cdot\|_\infty$ denote the $\max$ norm in $\C^{2n+1}$, given by
\[
\|(\bx,\by,z)\|_\infty = \max\{|x_1|,|y_1|,\ldots,|x_n|,|y_n|,|z|\}.
\]

\begin{lm}
	\label{bdenlarge}
	Let $R_1$ and $R_2$ be smoothly bounded compact domains in an open Riemann surface $\r$ such that $R_1 \subset \Int R_2$ and $R_1$ is a deformation retract of $R_2$. Suppose we are given
	\begin{enumerate}[label=(\roman*)]
		\item a generalised Legendrian curve $f \colon  R_1 \to \C^{2n+1}$ of class $\a^r(R_1)$ for some $r \in \N$,
		
		\item a finite set  $\Lambda \subset \Int R_2$ such that $\Lambda = \Lambda' \cup \Lambda''$ where $\Lambda' \subset \Int R_1$ and $\Lambda'' \subset \Int R_2 \backslash R_1$,
		
		\item a holomorphic Legendrian curve $\phi\colon  U \to \C^{2n+1}$, defined on a neighbourhood $U$ of $\Lambda''$,
		
		\item a function $m \colon  \Lambda \to \N \cup \{0\}$, and
		
		\item positive numbers $C,\rho > 0$.		
	\end{enumerate}
	If $f$ satisfies $\|f(p)\|_\infty > \rho$ everywhere on $bR_1$ and $\|\phi(p)\|_\infty > \rho$ holds for every $p \in \Lambda''$, then $f$ may be approximated in the $\c^r(R_1)$-topology by a generalised Legendrian curve $F\colon  R_2 \to \C^{2n+1}$ of class $\a^r(R_2)$ such that
	\begin{enumerate}[label=(\Roman*)]
		\item $\|F(p)\|_\infty > \rho$ everywhere on $R_2 \backslash \Int R_1$,
		\item $\|F(p)\|_\infty > \rho + C$ everywhere on $b R_2$,
		\item $F$ agrees with $f$ to order at least $m(p)$ at every point $p \in \Lambda'$, and
		\item $F$ agrees with $\phi$ to order at least $m(p)$ at every point $p \in \Lambda''$.
	\end{enumerate}
	If $\dd f(p) \neq 0$ for all $p \in \Lambda' \backslash \{m=0\}$ and $\dd\phi(p) \neq 0$ for all $p \in \Lambda'' \backslash \{m=0\}$, then $F$ may be made an immersion. Moreover, if the map $\tilde{f}\colon  \Lambda \to \C^{2n+1}$, given by $\tilde{f}(p) = f(p)$ for $p \in \Lambda$ and $\tilde{f}(p) = \phi(p)$ for $p \in \Lambda''$, is injective, then $F$ may be made an embedding.
\end{lm}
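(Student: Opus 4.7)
My plan is to adapt the proof of \cite[Lemma 5.2]{Alarcon2017} by feeding the jet interpolation data through Lemmas \ref{finiteInterpolation}, \ref{outsideInterpolation} and Theorem \ref{borderedInterpolation}. First I would slightly deform $f$, using Lemma \ref{finiteInterpolation} with trivial additional jet data on $\Lambda'$, to arrange that $f$ is holomorphic Legendrian on a compact smoothly bounded neighbourhood $R_1^+$ of $R_1$ with $R_1\subset \Int R_1^+\subset R_1^+\subset \Int R_2$, while preserving the strict inequality so that $\|f\|_\infty>\rho$ holds throughout $R_1^+\setminus \Int R_1$. This is possible because the bound on $bR_1$ is preserved by any sufficiently $\c^r$-small perturbation, and $f$ extends across $bR_1$ by a Mergelyan-type approximation.

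Next I would construct a Runge admissible set in $R_2$ of the form
\[
S=R_1^+\cup \bigcup_{j=1}^{N}E_j\cup \bigcup_{p\in\Lambda''}(E_p\cup \overline\Omega_p),
\]
where $\overline\Omega_p\subset U$ are pairwise disjoint closed disks centred at the points $p\in\Lambda''$, each arc $E_p$ connects $b\Omega_p$ to $bR_1^+$ transversely, and the arcs $E_j$ join $bR_1^+$ to $bR_2$ transversely; the whole collection is chosen so that $S$ deformation retracts onto $R_1$. I would then extend $f$ over $S$ to a generalised Legendrian curve $\tilde f\in\a^r(S)$ by setting $\tilde f=\phi$ on each $\overline\Omega_p$ and, along every arc, prescribing the $(\bx,\by)$-components as smooth paths matching the prescribed data at the endpoints and reconstructing $z$ by integrating $-\sum x_i\,\dd y_i$. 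Along each $E_j$ I would further require that one coordinate, say $y_1$, grows monotonically in absolute value from the value of $f$'s $y_1$-component on $bR_1^+$ to a value exceeding $\rho+C$ at $bR_2$, so that $\|\tilde f\|_\infty>\rho$ on all of $E_j$ and $\|\tilde f\|_\infty>\rho+C$ near its tip. Along $E_p$, whose endpoints both have norm exceeding $\rho$, it suffices to interpolate smoothly keeping $\|\tilde f\|_\infty>\rho$.

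Applying Lemma \ref{outsideInterpolation} to absorb the jets at $\Lambda''$ into $\a^r(S)$-data and then Lemma \ref{finiteInterpolation} (or Theorem \ref{borderedInterpolation} applied on a regular neighbourhood of $R_2$ when the embedding clause is needed) yields a holomorphic Legendrian curve $F$ on a neighbourhood of $R_2$ approximating $\tilde f$ in the $\c^r(S)$-topology and matching the prescribed jets at every point of $\Lambda$. If the approximation is close enough on $S$, the strict inequalities (I) and (II) transfer from $\tilde f$ to $F$ along $S$ by continuity. The step I expect to be the principal obstacle is that (I) must hold on the two-dimensional region $R_2\setminus \Int R_1$, whereas $S\setminus R_1^+$ is only one-dimensional; I would handle this by iterating the whole construction across a finite sequence of noncritical bumps $R_1^+=\Omega_0\subset\Omega_1\subset\cdots\subset\Omega_k=R_2$, where each $\Omega_{i+1}\setminus \Int\Omega_i$ is a thin disk attached along an arc in $b\Omega_i$, placing the arcs $E_j$ so that their tubular neighbourhoods, together with $R_1^+$, cover $R_2\setminus \Int R_1$, and using the Cauchy estimates to pass the lower bound on $\|F\|_\infty$ from the arcs to these small tubular neighbourhoods. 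Finally, the immersion conclusion is provided directly by Lemma \ref{finiteInterpolation}, and the injectivity clause by Theorem \ref{borderedInterpolation}, applied once the extension $\tilde f$ has been arranged injective on $\Lambda$.
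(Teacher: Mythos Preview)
Your overall outline is sound and matches the paper's strategy up through the point of attaching arcs $E_p$ to the disks $\overline\Omega_p$ around $\Lambda''$, extending $\tilde f$ as a Legendrian curve with $\|\tilde f\|_\infty>\rho$ along those arcs, and then approximating with the correct jets. The paper does essentially this, using the connectedness of the open sets $\{|\pr_{i(p)}|>\rho,\;|\pr_{j(p)}|>\rho\}$ in $\C^{2n+1}$ to find a Legendrian path staying above $\rho$.

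The genuine gap is exactly the step you flag as the principal obstacle, and your proposed fix does not close it. Passing a strict lower bound on $\|F\|_\infty$ from arcs to thin two--dimensional tubes via ``Cauchy estimates'' does not work: Cauchy estimates give upper bounds on derivatives from sup--norms, not lower bounds on values off a curve, and the width of tube on which continuity preserves the bound depends on $\|\dd F\|$, which you do not control until after $F$ is built; iterating over many bumps then accumulates error with no mechanism to stop it. The paper (following \cite[Theorem~5.1]{Alarcon2017}) uses a completely different device: it partitions $bS'$ into finitely many arcs $\alpha_j$ on each of which a \emph{specific} coordinate $\pr_i$ satisfies $|\pr_i\circ f|>\rho$, connects the endpoints to $bR_2$ by arcs $\gamma_j$, and for the resulting disks $\Omega_j$ it invokes the clause of Lemmas~\ref{finiteInterpolation} and \ref{outsideInterpolation} that allows one to \emph{keep the $i$-th coordinate fixed} while approximating. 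Since $\pr_i\circ F=\pr_i\circ g$ on all of $\Omega_j$, the bound $|\pr_i\circ F|>\rho$ holds on the part of $\Omega_j$ where it already held, and on the remaining inner disk $\Upsilon_j$ one makes a \emph{different} coordinate exceed $\rho+C$. This coordinate--freezing trick is the essential mechanism that propagates the lower bound across the two--dimensional region, and it is missing from your plan.
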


\begin{proof}
	Write $f$ in coordinates as $f = (X_1,\ldots,X_n,Y_1,\ldots,Y_n,Z)$, so that $\pr_i \circ f = X_i$ if $1\leq i \leq n$, $\pr_i \circ f = Y_k$ if $i = n+k$, and $\pr_{2n+1} \circ f = Z$, where $\pr_i\colon  \C^{2n+1}\to\C$ denotes the projection onto the $i$-th coordinate. By assumption, the set $\overline{R_2 \backslash R_1}$ consists of pairwise disjoint annuli. For simplicity, assume there is only one such annulus. The construction may be otherwise repeated on every boundary component of $R_2 \backslash R_1$.
	
	For every $p \in \Lambda''$ choose a smoothly bounded closed disk $O_p \Subset U$ such that $O_p$ does not intersect $R_1$ for every $p \in \Lambda''$ and the family of disks $\{O_p\}_{p \in \Lambda''}$ is pairwise disjoint. Set $O = \cup_p O_p$. Now, connect every $O_p$ with $bR_1$ by a smooth Jordan arc $\gamma_p$ with one endpoint $u_p \in bR_1$ and the other endpoint $o_p \in bO_p$ such that both of these intersections are transverse, the relative interior of $\gamma_p$ is contained in $\Int R_2 \backslash (R_1 \cup O)$, and the family $\{\gamma_p\}_{p \in \Lambda''}$ is pairwise disjoint.

	Note that the set $S = R_1 \cup \bigcup_{p\in \Lambda''}(\gamma_p \cup O_p)$ is admissible in $\Int R_2$. For every $p \in \Lambda''$ there is a number $1\leq i(p) \leq 2n+1$ such that $|\pr_{i(p)} \circ f(u_p)| > \rho$ and a number $1\leq j(p) \leq 2n+1$ such that $|\pr_{j(p)}\circ \phi| > \rho$ on $O_p$, after shrinking $O_p$ and adjusting the arc $\gamma_p$ if necessary such that the above properties still hold. Denote by $\mathbf{z}$ an arbitrary point in $\C^{2n+1}$. The set $W_p = \{\mathbf{z} : |\pr_{i(p)}(\mathbf{z})| > \rho, \, |\pr_{j(p)}(\mathbf{z})|>\rho\}$ is open and connected in $\C^{2n+1}$, thus there exists a smooth path $\widetilde{\Gamma}_p \colon  \im \gamma_p \to W_p$ such that $\widetilde{\Gamma}_p$ agrees with $f$ at $u_p$ and with $\phi$ at $o_p$ to order at least $r$. Now, by \cite[Lemma A.6]{Alarcon2017}, $\widetilde{\Gamma}_p$ may be approximated with a smooth Legendrian path $\Gamma_p$, taking values in $W_p$, agreeing with $f$ at $u_p$ and with $\phi$ at $o_p$ to order $r$. We extend $f$ to a generalised Legendrian curve $\tilde{f}\colon S\to \C^{2n+1}$ by
	\[
	\tilde{f}(u) =
	\begin{cases}
		f(u); & u \in R_1\\
		\Gamma_p(u); & u \in \im\gamma_p \\
		\phi(u); & u \in O.
	\end{cases}
	\]
	By construction, $\tilde{f}$ is of class $\a^r(S)$, thus, using Lemma \ref{finiteInterpolation}, it may be approximated with a holomorphic Legendrian curve $f'\colon S' \to \C^{2n+1}$, defined on some compact neighbourhood $S'$ of $S$ in $\Int R_2$ that deformation retracts onto $S$, such that $f'$ agrees with $f$ to order at least $m(p)$ at every point $p \in \Lambda'$ and $f'$ agrees with $\phi$ to order at least $m(p)$ at every point $p \in \Lambda''$. Moreover, by taking a good enough approximation and shrinking $S'$ if necessary, we may ensure $\|f'\|_\infty > \rho$ holds everywhere on $S'$.
	
	The rest of the proof is almost identical to the proof of \cite[Theorem 5.1]{Alarcon2017}, we thus only sketch the general idea. One first splits the boundary $bS'$ of $S'$ into a collection of arcs $\alpha_j$ where $j \in \Z_L= \Z/l\Z$ for some $l \in \N$ such that for every $j$ there is an $i \in \Z_{2n+1}$ such that $|\pr_i \tilde{f}|> \rho$ on $\alpha_j$. We then decompose $\Z_l$ into (possibly empty) pairwise disjoint subsets $I_i$ for $i \in \Z_{2n+1}$ such that for every $i$ and every $j \in \Z_l$ we have $|\pr_i \circ f|> \rho$ on $\alpha_j$. Label by $p_j$ the unique point in $\alpha_j \cap \alpha_{j+1}$ and connect $p_j$ to $bR_2$ by a smooth Jordan arc $\gamma_j$ with the other enpoint, labeled $q_j$, in $bR_2$. Then extend $\tilde{f}$ over the arcs $\gamma_j$ to a generalised Legendrian curve, still labeled $\tilde{f}$, such that $|\pr_i \circ \tilde{f}| > \rho$ on $\gamma_j$ and $|\pr_i \circ \tilde{f}(q_j)|>\rho +C$ for every $j \in I_i$ and every $i \in \Z_{2n+1}$. Label the arcs in $bR_2$ connecting $q_j$ to $q_{j+1}$ by $\beta_j$ and note that the union $\alpha_j \cup \gamma_j \cup \beta_j \cup \gamma_{j+1}$ is the boundary of a disk $\Omega_j$ in $R_2$ (see Figure \ref{defprcd}).
	
	\begin{figure}[h]
		\centering
		\begin{tikzpicture}
			\node[anchor=south west,inner sep=0] (image) at (0,0) {\includegraphics[height=4.5cm]{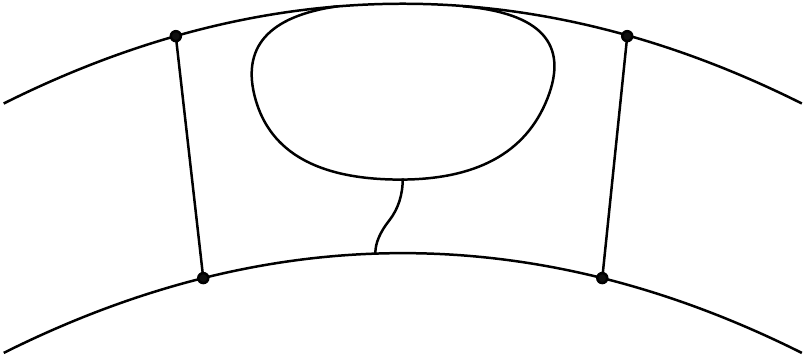}};
			\begin{scope}[x={(image.south east)},y={(image.north west)}]
				\node at (0.28,0.17) {$p_j$};
				\node at (0.75, 0.16) {$p_{j+1}$};
				\node at (0.6, 0.22) {$\alpha_j$};
				\node at (0.6, 1.05) {$\beta_j$};
				\node at (0.5, 0.7) {$\Upsilon_j$};
				\node at (0.22, 0.98) {$q_j$};
				\node at (0.8, 0.98) {$q_{j+1}$};
				\node at (0.2, 0.55) {$\gamma_j$};
				\node at (0.82, 0.55) {$\gamma_{j+1}$};
				\node at (0.33, 0.38) {$\Omega_j$};
				\node at (0.53, 0.40) {$\delta_j$};
				\node at (0.05, 0.15) {$bS'$};
				\node at (0.05, 0.85) {$bR_2$};
			\end{scope}
		\end{tikzpicture}
		\caption{Deformation procedure on $\Omega_j$.}
		\label{defprcd}
	\end{figure}

	We then approximate $\tilde{f}$ by a holomorphic Legendrian curve $g\colon R_2 \to \C^{2n+1}$ Such that
	$|\pr_i \circ g| > \rho$ on $\gamma_j \cup \alpha_j \cup \gamma_{j+1}$ and $|\pr_i\circ g(q_j)|>\rho +C$ for every $j \in I_i$, $i \in \Z_{2n+1}$. Start with $i=1$. For every $j \in I_i$ find a smoothly bounded closed disk $\Upsilon_j$ does not intersect $b\Omega_j$ except possibly in the relative interior of $\beta_j$ such that $|\pr_i\circ g|> \rho$ on $\overline{\Omega_j \backslash \Upsilon_j}$, which is possible by continuity of $g$. Connect $\Upsilon_j$ to $bS'$ by a smooth Jordan arc $\delta_j$ having one endpoint in $bS'$, the other in $b\Upsilon_j$, such that both these intersections are transverse and such that its relative interior is contained in the interior of the set $\overline{\Omega}_j \backslash \Upsilon_j$. Let $S_j = S'  \cup \bigcup_{j \in \Z_l\backslash I_i}\overline{\Omega}_j$ and $S_j' = S_j \cup \bigcup_{j \in I_i}(\delta_j\cup \Upsilon_j)$. Extend $g|_{S_j}$ over $\delta_j \cup \Upsilon_j$ to a generalised Legendrian curve $g_i\colon S_j'\to \C^{2n+1}$ such that $|\pr_{i+1} \circ g_i|>\rho+C$ on $\Upsilon_j$ and $|\pr_{i+1}\circ g|>\rho$ on $\delta_j$ and such that $\pr_i \circ g_i = \pr_i \circ g$ holds on $S_j'$. Note that the latter component is just the restriction of the holomorphic function $\pr_i \circ g$ to $S_j'$. By Lemma \ref{outsideInterpolation} we may approximate $g_i$ by a holomorphic Legendrian curve $G_i\colon R_2 \to \C^{2n+1}$ such that $\pr_i \circ G_i = \pr_i \circ g_i$ while interpolating $g_i$ at the points $p \in \Lambda$ to order at least $m(p)$. If the approximation is sufficiently good, $G_i$ satisfies $\|G_i\|_\infty > \rho$ on $\Omega_j$ and $\|G_i\|_\infty > \rho + C$ on $\beta_j$ for every $j \in I_i$, $|\pr_k\circ G_i|>\rho$ on $\alpha_j, \gamma_j$ and $|\pr_k\circ G_i(q_j)|>\rho+C$ for every $j \in I_k$, $k\neq i$.

	After this deformation procedure is repeated for every $i \in \Z_l$, one obtains a holomorphic Legendrian curve $G=G_{2n+1}\colon R_2 \to \C^{2n+1}$, satisfying the required properties. If needed, one may then approximate $G$ with an immersion or an embedding. If this approximation is close enough, properties (I) and (II) will hold, completing the proof.
\end{proof}

\section{Proof of Theorem \ref{mergelyan}}
	Let $(K_j)_{j=1}^\infty$ be a normal exhaustion of $\mathcal{R}$ by compact $\Or$-convex sets. By compactness of $S$, there exists a $j_0 \in \N$ such that $S \Subset K_{j_0}$. Relabel the sets $K_j$ so that the indexing starts at $K_{j_0}$, that is, obtain a normal exhaustion of the form
	$$
	S := K_0 \Subset K_1 \Subset K_2 \Subset \cdots \Subset K_j \Subset \cdots
	$$
	We may assume without loss of generality that $K_1$ deformation retracts onto $S$ (otherwise take $K_1$ to be the closure of some regular neighbourhood of $S$ and relabel accordingly). By slightly indenting the sets at their boundary without changing their topology (i.\ e.\ without introducing any new holes, so that the indented sets stay Runge) we can ensure $\Lambda \cap bK_j = \emptyset$ for all $j \in \N$, i.~e.\ the discrete set $\Lambda$ does not meet the boundary of any $K_j$. We further require that for all $j$, exactly one of the following conditions holds:
	\begin{enumerate}[label = \roman*)]
		\item $K_{j-1}$ is a deformation retract of $K_{j}$;
		\item $\chi(K_{j}\backslash \Int K_{j-1}) = -1$, where $\chi$ denotes the Euler characteristic, and $K_{j}\backslash \Int K_{j-1}$ contains no point in $\Lambda$.
	\end{enumerate}

	Note that one may always obtain such an exhaustion by fixing a Morse strictly subharmonic $\psi\colon  \r \to \R$ on $\r$ and then define the sets $K_j$ to be the sublevel sets $K_j = \{\psi \leq c_j\}$ of suitably chosen regular values $\{c_j\}_{j\in \N}$ of such a function. Then condition i) corresponds to having no critical values between $c_{j-1}, c_{j}$ and condition ii) corresponds to having exactly one critical value between $c_{j-1}$ and $c_{j}$.
	
	Recall the notation $\Lambda' = \Lambda \cap \Int S$ and $\Lambda'' = \Lambda \cap \r \backslash S$. We construct a sequence $\{f_j\}_{j \in \N}$ of holomorphic Legendrian curves $f_j\colon K_j\to \C^{2n+1}$, satisfying the properties
	\begin{enumerate}[label={\Alph*$_j$:}, ref={\Alph*$_j$}]
		\item
		\label{small}
		 $\|f_j-f_{j-1}\|_{\c^r(K_{j-1})} < \epsilon_j$,
		\item 
		\label{outsidejet}
		$f_j-\phi_p$ has a zero of order at least $m(p)$ at $p$ for all $p \in \Lambda_j'' = \Lambda'' \cap ( \Int K_{j} \backslash K_{j-1})$,
		\item 
		\label{insidejet}
		$f_j-f_{j-1}$ has a zero of order at least $m(p)$ at $p$ for all $p \in \Lambda_j'=\Lambda \cap K_{j-1}$,
		\item 
		\label{immersion}
		$f_j$ is an immersion if $\dd f(p) \neq 0$ for all $p \in \Lambda'$ and $\dd\phi(p) \neq 0$ for all $p \in \Lambda''$,
		\item 
		\label{embedding}
		$f_j$ is an embedding if $\tilde{f}|_\Lambda$, defined in the statement of the theorem, is injective.
	\end{enumerate}
	
	Suppose such a sequence exists. Properties \ref{small} ensure the sequence $f_j$ converges to a holomorphic map $F\colon \mathcal{R} \to \C$ for a suitably chosen sequence $(\epsilon_j)_{j\in \N}$, i.\ e.\ such that the series $\sum_k\epsilon_k$ converges. By continuity of derivatives of $f_j$, the limit $F$ is Legendrian while the properties \ref{outsidejet} and \ref{insidejet} ensure $F-\phi$ has a zero of order at least $m(p)$ at $p$ for all $p \in \Lambda''$. Moreover, the property C$_1$ guarantees that $F-f$ has a zero of order at least $m(p)$ at $p$ for all $p \in \Lambda'$.
	
	Let $\epsilon > 0$ and define
	$$
	\epsilon_j = \frac{1}{2^{j+1}} \min \left\{1, \,  \epsilon \right\}.
	$$
	Now we begin with the induction process. Set $f_0 = f$ and suppose the functions $f_0, \ldots, f_{j-1}$ have been constructed. Note that $\Lambda_j' = \Lambda_{j-1}''$, since $\Lambda \cap bK_j = \emptyset$ for all $j$.
	
	\textbf{Case i):} If $K_{j-1}$ is a deformation retract of $K_{j}$, then $K_{j-1}$ is Runge in some regular neighbourhood $U_{j}$ of $K_{j}$ in $\r$, which also deformation retracts onto $K_{j-1}$. By Lemma \ref{outsideInterpolation} we may assume $f_{j-1}$ is defined on some Runge admissible subset $S_{j-1} \subset U_j$ and agrees with $\phi$ to order at least $m(p)$ for all $p \in \Lambda_j''$. Lemma \ref{finiteInterpolation} then furnishes a generalised Legendrian curve $f_j\colon K_j \to \C^{2n+1}$ satisfying the properties A$_j$, B$_j$ and C$_j$. If the condition in \ref{immersion} holds, Lemma \ref{finiteInterpolation} ensures $f_j$ can be chosen an immersion. If the condition in \ref{embedding} holds, one first uses Lemma \ref{finiteInterpolation} to obtain an immersion $f_j\colon K_j \to \C^{2n+1}$ with the correct jets at the points $p \in \Lambda \cap K_j$ and then uses Theorem \ref{borderedInterpolation} to correct $f_j$ to an embedding (still denoted $f_j$), noting that $K_j$ is a smoothly bounded compact domain in $\r$, thus a compact bordered Riemann surface.
	
	\textbf{Case ii):} If $\chi(K_{j}\backslash \Int K_{j-1}) = -1$, then there exists a smooth embedded arc $\lambda\colon  I \to \Int K_{j}$ with endpoints in $bK_{j-1}$ and otherwise disjoint from $K_{j-1}$ such that $\lambda(I)$ does not intersect $\Lambda$ and $K_{j-1}' = K_{j-1} \cup \Lambda$ is an admissible subset in some regular neighbourhood $U_j$ of $K_j$ in $\r$ and $K_j$ deformation retracts onto $K_{j-1}'$. We can then extend $f_{j-1}\colon K_{j-1} \to \C^{2n+1}$ to a Legendrian curve $f_{j-1}' \colon  K_{j-1}' \to \C^{2n+1}$ of class $\mathcal{A}^r(K_{j-1}')$. Using $K_{j-1}'$ instead of $K_{j-1}$ the situation is the same as in Case i), thus one obtains the curve $f_j$ using the same arguments as above.
	
	It now rests to prove that the sequence $f_j$ satisfying \ref{immersion} for all $j$ converges to an immersion and, moreover, if \ref{embedding} is satisfied for all $j$, that the limit map is injective. Suppose first \ref{immersion} holds for all $j$. If $u \in \r$, then $u \in K_j$ for some $j \geq 0$, but then
	\begin{align*}
		\|\dd F(u)\| &\geq \|\dd f_j(u)\| - \|\dd f_j(u)-\dd F(u)\| \geq  \|\dd f_j(u)\| - \sum_{k=j+1}^\infty \|\dd f_k(u)-\dd f_{k-1}(u)\| \geq \\
		&\geq \min_{u \in K_j}\|\dd f_j(u)\| \left(1 -\sum_{k=j+1}^\infty\frac{1}{2^{k+1}} \right) > 0,
	\end{align*}
	thus $F$ is an immersion, if all $f_j$ are, since in that case the above minimum is strictly greater than zero. This holds by properties \ref{immersion}.
	
	Suppose now, in addition, that \ref{embedding} holds for all $j$. By the Cauchy estimates, there exists a number $\delta_j > 0$ such that any holomorphic map $g\colon  \r \to \C^{2n+1}$ which is $\delta_j$-close to $f_j$ on $K_j$ in the $\c^0 (K_j)$-topology is an embedding on $K_j$. Let
	\[
	\epsilon_{j+1} = \frac{1}{2^{j+1}}\min\{1,\delta_j,\epsilon_j\}.
	\]
	Any two points $q,q' \in \r$ are contained in some $K_{j_0}$ for a big enough $j_0$, but then
	\[
	\|\tilde{f}-f_{j_0}\|_{K_{j_0}} \leq 
	\sum_{j \geq j_0} \|f_{j+1} - f_j\|_{K_{j_0}}
	\leq \sum_{j \geq j_0} \epsilon_j
	\leq \sum_{j\geq j_0} \frac{\delta_{j}}{2^{j+1}}
	\leq \frac{\delta_{j_0}}{2^{j_0+1}} \leq
	\delta_{j_0},
	\]
	thus $F|_{K_{j_0}}$ is an embedding. Since $q,q'$ were arbitrary, $F$ is injective.

	We now explain how to make the approximating curve proper. By assumption the map $\phi\colon  \Lambda'' \to \C^{2n+1}$ is proper. Relabel the sets $K_j$ in the normal exhaustion for $\r$ such that $\Lambda \cap \{\|\phi\|_\infty \leq j\} \subset \Int K_j$ holds for every $j \in \N$. By properties i) and ii) above, it follows that	for every $j \in \N$ the set $K_{j+1} \backslash \Int K_j$ has finitely generated first homology group. As before we construct a sequence of approximating Legendrian curves $f_j\colon  K_j \to \C^{2n+1}$ satisfying A$_j$ -- E$_j$, but now we additionally require
	\begin{enumerate}[label=\Alph*$_j$)]
		\setcounter{enumi}{5}
		\item if $\|f_{j-1}\|_\infty > \rho$ on $bK_{j-1}$ for some $\rho > 0$, then $\|f_j\|_\infty > \rho + 1$ on $bK_j$.
	\end{enumerate}

	We approximate $f\colon  S \to \C^{2n+1}$ with a holomorphic Legendrian embedding $f'\colon  U \to \C^{2n+1}$, defined on a relatively compact neighbourhood $U$ of $S$ in $\r$ which deformation retracts onto $S$ and contains no point in $\Lambda''$, such that $f'$ agrees with $f$ to order at least $m(p)$ at every point $p \in \Lambda'$. By shrinking $U$ if necessary, we may assume $f'$ is defined on $\overline{U}$ and has no zeroes on $bU$, hence there exists a number $\rho>0$ such that $\|f'\|_\infty > \rho$ on $bU$. Put $f_0 = f\colon  \overline{U} \to \C^{2n+1}$.
	
	Now suppose $f_0,f_1,\ldots,f_j$ have been constructed and assume $\|f_j\|_\infty > \rho$ on $bK_j$. By a previous observation the set $K_{j+1}$ deformation retracts onto $K_j'$, where the latter is obtained from $K_j$ by attaching to it finitely many smooth Jordan arcs with endpoints in $bK_j$. Furthermore, we may assume none of these arcs intersects $\Lambda''$. Thus, $K_j$ is an admissible set in some open neighbourhood $U_j$ of $K_j'$ such that $U_j \backslash K_j$ does not contain any point in $\Lambda$. We approximate $f_j$ with a holomorphic Legendrian curve $f_j'\colon U_j \to \C^{2n+1}$ that still satisfies properties A$_j$ -- F$_j$. Choose a slightly smaller set $U_j' \Subset U_j$ which deformation retracts onto $K_j'$. Now use the Lemma \ref{bdenlarge} to approximate $f_j'$ with a generalised Legendrian curve $f_{j+1} \colon  K_{j+1} \to \C^{2n+1}$ satisfying the properties A$_{j+1}$ -- E$_{j+1}$.
	
	A similar computation as above shows that the sequence of curves $f_j$ converges uniformly on compact sets in $\r$ to a holomorphic Legendrian curve $F\colon  \r \to \C^{2n+1}$ satisfying the properties in the theorem. Namely, as above, one shows the curve $F$ is an injective immersion, hence an embedding, by properties F$_j$.
	
\section{Proof of Theorem \ref{carleman}}
	The theorem follows from previous results using a simple induction procedure. By Lemma \ref{bddexhaustion} there exists a normal exhaustion $\{K_j\}_{j=1}^\infty$ of $\r$ by (compact) $\o(\r)$-convex sets $K_j$ such that for all $j$ the set $S_j := S \cup K_j$ is $\o(\r)$-convex. By slightly deforming the sets $K_j$ if necessary we may furthermore achieve that for all $j$:
	\begin{enumerate}[label = \alph*)]
		\item $\Lambda_j := \Lambda \cap K_j \subset \Int K_j$ and
		\item $bK_j \cap S \subset E\backslash K$ and the intersection $bK_j \cap E$ is transverse.
	\end{enumerate}
	
	We first show how to construct the approximating curve in the general case, that is without the immersivity, injectivity or properness assumptions on $f$. We inductively construct generalised Legendrian curves $f_j\colon  S_j \to \C^{2n+1}$ satisfying the conditions:
	\begin{enumerate}[label= \Alph*$_j$:, ref=\Alph*$_j$]
		\item 
		\label{Aj}
		$|f_j(q)-f_{j-1}(q)| < \epsilon_j$ for all $q \in S_{j-1}$ where
		$$
		\epsilon_j = \frac{1}{2^j} \min \left\{\inf_{q \in S\cap K_{j}} \varepsilon(q), 1\right\},
		$$
		\item 
		\label{Bj}
		$f_j - f_{j-1}$ has a zero of order at least $k(q)$ for all $q \in \Lambda'_j := \Lambda \cap K_{j-1}$,
		\item 
		\label{Cj}
		$f_j - \phi$ has a zero of order at least $k(p)$ for all $p \in \Lambda_{j}'' := \Lambda \cap (K_j \backslash \Int K_{j-1})$ (note that $\Lambda_j = \Lambda_j' \cup \Lambda_j''$)
		\item 
		\label{Dj}
		$f_j = f$ on $S \backslash K_{j+1}$
	\end{enumerate}

	Suppose we have constructed such a sequence. Properties \ref{Aj} ensure the sequence $(f_j)_{j=0}^\infty$ converges to a holomorphic Legendrian curve $F\colon  \r \to \C^{2n+1}$ which is $\varepsilon$-close to $f$ in the $\c^0(S)$-topology. Properties \ref{Bj} and \ref{Cj} ensure the approximating curve $F$ satisfies the properties (I) and (II) from the theorem whereas properties \ref{Dj} are needed for the induction step which we explain next.

	Set $S_0 =S$, $f_0 = f$, and note that the conditions A$_0$ to D$_0$ are void. Now suppose $f_1, \ldots, f_{j-1}$ have been constructed. Note that $S_{j-1}'=S_{j-1} \cap K_j$ is by construction a compact Runge admissible set in some smoothly bounded relatively compact open neighbourhood $U_j$ of $K_j$ in $\r$ and $\Lambda''_j \subset K_j = \Lambda_j'' \cap \Int K_j$ is a finite set not intersecting $S_{j-1}'$. We suppose in addition that $K_j$ is a deformation retract of $U_j$. By attaching a finite number (possibly none) of pairwise disjoint smooth embedded Jordan arcs to $S_{j-1}'$ with endpoints in $bK_{j-1}$ we obtain a compact Runge admissible set $K_{j-1}'$ such that $K_{j-1}'$ is a deformation retract of $K_j$. By a slight deformation of these arcs if necessary, we ensure $K_{j-1}'$ does not intersect $\Lambda_j''$. We then extend $f_{j-1}$ to a generalised Legendrian curve $f_{j-1}'\colon  K_{j-1}' \to \C^{2n+1}$.	By Lemma \ref{outsideInterpolation} we may assume $f_{j-1}'$ is defined on some compact Runge admissible set $K_{j-1}'' \subset K_{j}$ containing $\Lambda_j''$ in its interior. By Lemma \ref{finiteInterpolation} there exists a generalised Legendrian curve $\tilde{f}_j \colon  K_j \to \C^{2n+1}$ satisfying the conditions \ref{Bj} and \ref{Cj} such that $|\tilde{f}_j(q)-f_{j-1}(q)| < \epsilon_j$ for all $q \in S_j \cap K_{j-1}$. 
	
	Note that $bK_j \cap S = bK_j \cap E$ is a finite set $bK_j \cap E = \{p_{j,1},\ldots, p_{j,n_j}\}$ and let $E_{j,i} \subset E$ be the Jordan curve in the collection $E$ containing $p_{j,i}$. For each $p_{j,i}$ let $\epsilon_{j,i}\colon [0,\delta] \to S$ be a parametrisation of $E_{j,i}$ near $p_{j,i}$, such that $\epsilon_{j,i}([0,\delta]) \cap E_{j,i} = \epsilon_{j,i}(0) = p_{j,i}$, i.\ e.\ $\epsilon_{j,i}$ parametrises a subarc of $E_{j,i}$ lying outside of $K_j$ and emanating from $p_{j,i}$. If the approximation of $f_{j-1}$ by $\tilde{f}_j$ is good enough, the line $t \mapsto (1-t/\delta)\tilde{f}(p_{j,i}) + (t/\delta) f(p_{j,i}) \in \C^{2n+1}$ lies in $\Int K_{j+1}$ and is $\c^0$-close to the curve $t\mapsto f(\epsilon_{j,i}(t))$, hence it may be approximated (after smoothing it out near the endpoints) in $\c^0$-topology by a smooth Legendrian arc $\gamma_{j,i}\colon [0,\delta] \to \C^{2n+1}$ such that $\gamma_{j,i}$ agrees with $\tilde{f}\circ \epsilon_{j,i}$ at $t=0$ to order at least 1 and with $f\circ \epsilon_{j,i}$ at $t=\delta$ also to order at least 1 (see \cite[Theorem A.6]{Alarcon2017}). By repeating this procedure for all $p_{j,i}$, $i=1,\ldots,n_j$, we obtain a generalised Legendrian curve $f_j\colon K_j\cup S \to \C^{2n+1}$ of class $\mathcal{A}^1(K_j \cup S)$ such that $f_j$ is $\c^0$ close to $f$ on $S\cap K_{j+1}$ and $f_j = f$ on $S \backslash K_{j+1}$, i.\ e.\ $f_j$ satisfies the properties \ref{Aj}, \ref{Bj}, \ref{Cj} and \ref{Dj}, closing the induction.	
	
	Now suppose the map $\tilde{f} \colon  S \cup O \to \C^{2n+1}$, defined in the statement of the theorem, is an immersion on some neighbourhood of the set $\Lambda$. By Lemma \ref{outsideInterpolation} we may ensure all the approximating maps $f_j$ are also immersions in addition to satisfying properties \ref{Aj} - \ref{Dj}. Furthermore, by the Cauchy estimates and compactness of $K_j$ there exists for each $j$ a number $\nu_j > 0$ such that any map $g\colon K_j \to \C^{2n+1}$ of class $\mathcal{A}^1(K_j)$ is an immersion whenever $|f_j(q)-g(q)| < \nu_j$. In the property \ref{Aj} we additionally require that
	\[
	\epsilon_j = \frac{1}{2^j} \min \{1, \nu_{j-1}, \inf_{u \in K_j}\varepsilon(u)\}
	\]
	As above, the limit map $F = \lim_{j\to \infty}f_j$ is a holomorphic Legendrian curve $F\colon  \r \to \C^{2n+1}$ which is $\varepsilon$-close to $f$. Any point $u \in \r$ is contained in some $K_j$ for $j$ big enough, thus
	\begin{align*}
		\|\dd F(u)\| &\geq \|\dd f_j(u)\| - \|\dd f_j(u)-\dd F(u)\| \geq  \|\dd f_j(u)\| - \sum_{k=j+1}^\infty \|\dd f_k(u)-\dd f_{k-1}(u)\| \geq \\
		&\geq \nu_j
		\left(1 -\sum_{k=j+1}^\infty\frac{1}{2^{k+1}} \right) > 0,
	\end{align*}
	since $f_j$ is an immersion and $K_j$ is compact. Thus, $F$ is an immersion.
	
	Suppose now the map $\tilde{f}|_\Lambda$ is also injective. By Theorem \ref{borderedInterpolation} we can insure each map $f_j|_{K_j}\colon  K_j \to \C^{2n+1}$ is a holomorphic Legendrian embedding. It follows again from the Cauchy estimates that for each $j$ there is a $\delta_j > 0$ such that any holomorphic immersion $g\colon K_j \to \C^{2n+1}$ of class $\mathcal{A}^r(K_j)$ which is $\delta_j$-close to $f_j$ on $K_j$ is en embedding. To construct an injective limit map it thus suffices to put
	\[
	\epsilon_j = \frac{1}{2^j} \min \{1,\delta_{j-1}, \inf_{u \in K_j}\varepsilon(u)\}
	\]
	in the induction procedure. The limit map $F = \lim_j f_j$ is then an embedding when restricted to an arbitrary compact set $K \subset \r$. Thus, $F$ is injective.
	
	Finally, suppose $\tilde{f}_{S \cup \Lambda''}$ is proper. Relabel the sets $K_j$ of the previously constructed normal exhaustion such that in addition the following hold:
	\begin{enumerate}[label=\alph*)]
		\setcounter{enumi}{2}
		\item $\{q \in S \cup \Lambda'': \;\|\tilde{f}(q)\|_\infty \leq j\} \subset K_j$, and
		\item $\|\tilde{f}(q)\|_\infty > j$ for every $q \in bK_j \cap S$.
	\end{enumerate}
	When constructing the curves $f_j\colon  K_j \to \C^{2n+1}$ we require that these curves satisfy the properties A$_j$ -- D$_j$ in addition to:
	\begin{enumerate}[label=\Alph*$_j$:]
		\setcounter{enumi}{4}
		\item $\|f_j(q)\|_\infty > j$ for every $q \in bK_j$.
	\end{enumerate}
	We now explain how to ensure the above property for every $j \in \N$. The extension $f_{j-1}'$ of $f_{j-1}$ over the arcs attached to $K_{j-1}$ may be constructed such that $\|f_{j-1}'(q)\| > j-1$ holds for every point $q$ lying in the newly added arcs by using \cite[Theorem A.6]{Alarcon2017} and the assumption E$_{j-1}$. Then, using Lemma \ref{bdenlarge}, we approximate $f_{j-1}'$ with a generalised Legendrian curve $\tilde{f}_j \colon  K_j \to \C^{2n+1}$ such that $\|\tilde{f}_j(q)\|_\infty > j$ holds for every $q \in bK_j$. This implies $\|\tilde{f}_j(p_{j,i})\|_\infty > j$ for every $i=1,\ldots,n_j$, where $p_{j,i}$ are points in $bK_j \cap S$ as above. By c), we may construct the arcs $\gamma_{j,i}$ as above such that $\|\gamma_{j,i}(t)\|_\infty > j$ for every $t \in [0,\delta]$, obtaining a generalised Legendrian curve $f_j\colon  S_j \to \C^{2n+1}$ satisfying the property F$_j$ thus closing the induction. 
	
	\bigskip
	\noindent
	\textsc{Acknowledgements:} The author is supported by grant MR-54828 from ARRS, Republic of Slovenia, associated to the research program P1-0291 \emph{Analysis and Geometry}. The author would like to thank F. Forstnerič for his advice, support and guidance.

	\bigskip

%%%%%%%%%%%%%%%%%%%%%%%%%%%%%%%%%%%%%%%%%%%%%%%%%%%

\bibliographystyle{abbrv}                 

\bibliography{document}

\end{document}